 \newtheorem{theorem}{Theorem}[section]
 \newtheorem{definition}{Definition}[section]
 \theoremstyle{definition}
\newtheorem{example}{Example}[section]
\newtheorem{lemma}[theorem]{Lemma}
\newtheorem{corollary}[theorem]{Corollary}
\newtheorem{note}{Note}
 \let\originalleft\left
 \let\originalright\right
 \renewcommand{\left}{\mathopen{}\mathclose\bgroup\originalleft}
 \renewcommand{\right}{\aftergroup\egroup\originalright}
\renewcommand{\subsection}{\@startsection{subsection}{2}%
  {\z@}{-3.25ex \@plus -1ex \@minus -.2ex}%
  {1ex \@plus .2ex}{\normalfont\bfseries\newline}}
 \newcommand{\Addresses}{{
 		\footnote{

 			\noindent	 \textsuperscript{1,2} Department of Mathematics, Indian Institute of Technology Roorkee, Roorkee, 247667, India.	
 			
 			\noindent  \textit{e-mail\textsuperscript{1}:} \texttt{p\_yadav@ma.iitr.ac.in}
 			
 			\noindent  \textit{e-mail\textsuperscript{2}:} \texttt{tanuja.srivastava@ma.iitr.ac.in}


 }}}
\begin{document}
 	\title[]{Likelihood geometry of the Gumbel's Type-I bivariate exponential distribution\Addresses}
 	\author [Pooja Yadav.  Tanuja Srivastava]{Pooja Yadav\textsuperscript{1}.  Tanuja Srivastava\textsuperscript{2}}
 	\maketitle
	\begin{abstract}
    	In algebraic statistics, the maximum likelihood degree of a statistical model refers to the number of solutions (counted with multiplicity) of the score equations over the complex field. In this paper, the maximum likelihood degree of the association parameter of Gumbel's Type-I bivariate exponential distribution is investigated using algebraic techniques.
	\end{abstract}
  \maketitle	
	\section{Introduction}

	The exponential distribution is the most helpful tool for lifetime models, common in many real-world applications such as queueing theory, risk analysis, medical science, and environmental studies \cite{BBa}. Since the exponential distribution has many real-world applications, there is a growing interest in the bivariate extension of this distribution. In the literature, several bivariate exponential distributions have been proposed by many authors \cite{KBJ}, \cite{LB}, \cite{MO}. Bivariate exponential distributions are primarily used in many applications, such as survival analysis and telecommunications \cite{Ha}, \cite{LB}, \cite{NK}. In $1960$, Gumbel introduced three bivariate exponential distributions called the Type-I (GBED-I), Type-II (GBED-II) and Type-III (GBED-III), in which the marginal distribution of each variable follows exponential distribution \cite{Gu}. Gumbel's bivariate exponential distribution is used in many applications in reliability engineering \cite{DNR}, \cite{PM}. 
	
	This paper will focus on Gumbel's Type-I bivariate exponential distribution (GBED-I). The maximum likelihood estimation of the parameter of GBED-I was initially attempted by Barnett in \cite{Ba} but could not provide a conclusive solution. Further, the maximum likelihood estimator of the parameter of GBED-I using ranked set sampling, generalized modified ranked set sampling and extreme ranked set sampling is attempted by Sevil and Yildiz in \cite{SY}.  In both works, the closed-form solution could not be obtained.
	
	The maximum likelihood degree (ML-degree) of a statistical model is the number of critical points of the log-likelihood function over the complex field. Knowledge of the ML-degree is important when applying the numerical algebraic-geometric method to get solutions to the score equations in the maximum likelihood estimation problem. The maximum likelihood estimator is one of the solutions of the score equations. When the score equations are rational, algebraic techniques can be used to find the maximum likelihood estimators. In algebraic statistics, the ML-degree for multinomial probabilities and parameters of multivariate Gaussian distribution, especially different structures of variance-covariance matrix, are studied in \cite{ABBG}, \cite{CMR}, \cite{LNRW} and \cite{MMW}. The score equations of discrete and Gaussian exponential family distributions are rational, and the ML-degree for these exponential family distributions is constant for generic data. If the ML-degree is one, it can be expressed equivalently by saying that the maximum likelihood estimator exists and is a rational function of data \cite{DSS}.  The readers are referred to \cite{CHKS} and \cite{S} for more details about the ML-degree for generic data.

	In this paper, we calculate the maximum likelihood degree (ML-degree) of the parameter of Gumbel's Type-I bivariate exponential distribution (GBED-I) based on the given data. Although GBED-I is not a part of the exponential family of distributions, its score equation is rational, which allows us to use algebraic techniques to compute the ML-degree of the parameter of GBED-I. An important observation is that the ML-degree of the parameter of GBED-I depends on the number of data points and the data itself. We find that the ML-degree of the parameter of GBED-I is always greater than one and can be at most twice the sample size, depending on the data's structures.
	
	The paper is organized as follows, in section 2, the maximum likelihood estimation problem of the parameter of GBED-I is introduced and shown that the score equation is a rational function of the parameter and the ML-degree of the parameter of GBED-I is defined. In section 3, the geometry of the score equation is explained. In section 4, the multiplicity of common zeros of numerator and denominator functions of the score equation is counted. In section 5, the ML-degree of the parameter of GBED-I is calculated. Finally, the paper is concluded in the last section.

	\section{Preliminaries}
	The probability density function (PDF) of Gumbel's Type-I bivariate exponential distributed random vector $X=(x,y)^\top$, $(x, y)$ in the first quadrant of $\mathbb{R}^2$, with association parameter $\theta$, is
	\begin{equation*}
		f(x, y)=  e^{-(x+y+\theta xy)} [(1+\theta x)(1+\theta y)-\theta],
	\end{equation*}
	where, $ \theta \in [0,1] \subset \mathbb{R}$, and the marginal distributions of each $x$ and $y$ are standard exponential. If $\theta =0 $, then $x$ and $y$ are mutually independent \cite{KBJ}. For more properties and results of GBED-I, see \cite{BBe}, \cite{CSH}, \cite{KBJ}.	GBED($\theta$) denotes Gumbel's Type-I bivariate exponential distribution with $\theta$ as an association parameter. 
	
	In this section, the maximum likelihood estimation problem of the association parameter $\theta$ of Gumbel's bivariate exponential distribution GBED($\theta$) is explained for a finite set of independent data and the ML-degree of the association parameter $\theta$ in GBED($\theta$) is defined.
	
	\textbf{Maximum likelihood estimation of the association parameter $\theta$}
	
	Let $X_{1}=(x_1,y_1)^\top, X_{2}=(x_2,y_2)^\top, \ldots, X_{n}=(x_n,y_n)^\top$ be random sample from Gumbel's bivariate exponential distribution GBED($\theta$), then the maximum likelihood estimator of $\theta$ is that value of $\theta \in [0,1]$, which maximizes the likelihood function given the data if it exists.
	
	The likelihood function for $\theta$ is
	\begin{align*}
		L(\theta |X_1,X_2,\ldots X_n) &=\prod_{i=1}^{n} f(x_{i},y_{i})\\
		&= \prod_{i=1}^{n} \left(  e^{-(x_{i}+y_{i}+\theta x_{i} y_{i})} [(1+\theta x_{i})(1+\theta y_{i})-\theta] \right), 
	\end{align*}
	
	and the log-likelihood function (up to an additive constant) is
	
	\begin{equation}
		\label{eq:1}
		\ell(\theta)= -\sum_{i=1}^{n} \left( \theta x_{i}y_{i} \right)+ \sum_{i=1}^{n} \log[1+(x_{i}+y_{i}-1)\theta + x_{i}y_{i} \theta^2].
	\end{equation}
	
	The score equation for maximizing $\ell(\theta)$ with respect to $\theta$ is 
	
	\begin{equation*}
		\sum_{i=1}^{n}\frac{(x_{i}+y_{i}-1)+2 x_{i} y_{i}\theta}{1+(x_{i}+y_{i}-1)\theta + x_{i} y_{i} \theta^2} =\sum_{i=1}^{n}x_{i}y_{i}, 
	\end{equation*}
	
	or
	\begin{equation}
		\label{eq:2}
		\sum_{i=1}^{n}\frac{(x_{i}y_{i})^2 \theta^2 +x_{i}y_{i}(x_{i}+y_{i}-1-2) \theta+ [x_{i} y_{i}-(x_{i}+y_{i}-1)]} {x_{i} y_{i} \theta^2+(x_{i}+y_{i}-1)\theta+1}=0.		
	\end{equation}
	
	This equation is a summation of rational functions in $\theta$, which will have more than one solution and does not have a closed-form solution, this makes it necessary to apply some computational algebraic techniques to solve this.
	
	Since $\mathbb{R}$ is not an algebraically closed field, the solutions of the score equation are considered over the complex field $\mathbb{C}$.
	
	\begin{note}
		The data considered throughout this paper excludes the data of the form $X_{i}=(x_{i},y_{i})^\top, X_{j}=(y_{i},x_{i})^\top$ and $X_{i}=X_{j}$ $\forall i,j, i\ne j$, as this data structure does not provide any helpful information.
	\end{note}
	
	\begin{definition}[\textbf{Maximum likelihood degree}]
		\label{def:1}
		The maximum likelihood degree or ML-degree of the association parameter $\theta$ of this model is the number of solutions of the score equation \eqref{eq:2}, counted with multiplicity over the complex field, given the data.
	\end{definition}
	
	Let for every $i=1,2,\ldots n$, $c_{i}=x_{i}y_{i}$, $d_{i}=x_{i}+y_{i}-1$, and
	\begin{equation}\label{eq:3}
		f_{i}(\theta) =c_{i}^2  \theta^2+c_{i}(d_{i}-2) \theta +(c_{i}-d_{i}), 
	\end{equation}
	\begin{equation}\label{eq:4}
		g_{i}(\theta)=c_{i}  \theta^2 +d_{i} \theta+1. 
	\end{equation}
	Then, equation \eqref{eq:2} can be rewritten as
	
	\[\sum_{i=1}^{n}\frac{c_{i}^2  \theta^2+c_{i}(d_{i}-2) \theta +(c_{i}-d_{i})}{c_{i}  \theta^2 +d_{i} \theta+1}=0,\]
	or
	\[\sum_{i=1}^{n} \frac{f_{i}(\theta)}{g_{i}(\theta)}=0,\]	
	or
	\[ \frac{f(\theta)}{g(\theta)}=0 \implies f(\theta)=0,\]
	with 
	\begin{equation}
		\label{eq:5}
		f(\theta) =	\displaystyle \sum_{i=1}^{n} \left(f_{i}(\theta) \displaystyle \prod_{j=1	j\ne i}^{n} g_{j}(\theta)\right),
	\end{equation}
	and
	\begin{equation}
		\label{eq:6}
		g(\theta)=\displaystyle \prod_{i=1	}^{n} g_{i}(\theta).
	\end{equation} 
	
	The degree of both polynomials $f(\theta)$ and $g(\theta)$ are $2n$.
	
	The solutions of the score equation are the zeros of $f(\theta)$. However, these solutions may contain the points where the score equation is not defined due to the cleared denominator. Therefore, the solutions of the score equation are the zeros of $f(\theta)$, which are not the zeros of $g(\theta)$. So, for the ML-degree, the common zeros of $f(\theta)$ and $g(\theta)$ should be removed from the zeros of $f(\theta)$.
	
	To determine the common zeros of $f(\theta)$ and $g(\theta)$ for the given data, the investigation of the geometry of the score equation is required. In the next section, the geometry of the score equation is discussed.

	\section{Geometry of  the score equation}
	Since $f(\theta)$ is a polynomial of degree $2n$, it will have $2n$ zeros in the complex field, counted with multiplicity \cite{CLOS}. The solutions of the score equation are in the variety of $f(\theta)$ (referred to as $V(f)$). Hence, the ML-degree of $\theta \le 2n$. For the ML-degree of $\theta$, the points of concern are 
	\[V(f)\setminus \left(V(f)\cap V(g)\right) =V(f)\setminus V(f,g).\]
	
	In this section, some results are obtained for the variety $V(f,g)$ to be non-empty, which are used further in the next section.

	\begin{theorem}
		\label{thm:2}
		$f(\theta)$ and $g(\theta)$ will have common zeros if and only if either $f_{k}(\theta)$ and $g_{k}(\theta)$ have common zeros for some $k\in \{1,2,\ldots,n\}$ or there is a pair $(g_{j},g_{k})$ $j\ne k$ having common zeros.
	\end{theorem}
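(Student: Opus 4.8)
The plan is to exploit the product structure of $g(\theta)=\prod_{i=1}^{n} g_i(\theta)$ together with the leave-one-out structure of $f(\theta)=\sum_{i=1}^{n} f_i(\theta)\prod_{j\ne i} g_j(\theta)$. The key preliminary observation is that any common zero $\alpha$ of $f$ and $g$ must satisfy $g(\alpha)=0$, and since $g$ is a product of the $g_i$, there exists some index $k$ with $g_k(\alpha)=0$. This single fact drives both directions of the equivalence.

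For the forward implication, I would evaluate $f$ at such an $\alpha$. For every index $i\ne k$, the product $\prod_{j\ne i} g_j(\alpha)$ still contains the factor $g_k(\alpha)=0$, so every summand with $i\ne k$ vanishes; only the $i=k$ term survives, leaving $f(\alpha)=f_k(\alpha)\prod_{j\ne k} g_j(\alpha)$. Since $f(\alpha)=0$, this product must vanish, so either $f_k(\alpha)=0$, in which case $\alpha$ is a common zero of $f_k$ and $g_k$, or $\prod_{j\ne k} g_j(\alpha)=0$, which forces $g_j(\alpha)=0$ for some $j\ne k$, so that $(g_j,g_k)$ is a pair with a common zero. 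This yields exactly the two stated alternatives.

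For the converse, I would treat the two cases separately, in each case checking both $g(\alpha)=0$ and $f(\alpha)=0$. In either case a factor $g_k(\alpha)=0$ appears, so $g(\alpha)=0$ is immediate. If $f_k(\alpha)=g_k(\alpha)=0$, then the $i=k$ summand of $f$ vanishes because $f_k(\alpha)=0$, while every summand with $i\ne k$ vanishes because it retains the factor $g_k(\alpha)=0$; hence $f(\alpha)=0$. If instead $g_j(\alpha)=g_k(\alpha)=0$ with $j\ne k$, then for each index $i$ the leave-one-out product $\prod_{l\ne i} g_l(\alpha)$ omits at most one of the two vanishing factors $g_j$ and $g_k$, so it always retains the other; therefore every summand vanishes and again $f(\alpha)=0$.

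The argument is essentially bookkeeping, so I do not expect a genuine obstacle; the one point requiring care is the final step of the converse, where one must verify that a single leave-one-out product cannot simultaneously discard both $g_j$ and $g_k$. Since exactly one factor is removed from each product and $j\ne k$, at least one of the two always survives, which is precisely what makes the distinctness hypothesis $j\ne k$ essential.
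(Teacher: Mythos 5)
Your proposal is correct and follows essentially the same route as the paper's proof: both directions rest on the observation that a zero of $g$ must be a zero of some $g_k$, that this kills every summand of $f$ except the $k$-th in the forward direction, and that in the converse each leave-one-out product retains at least one vanishing factor (either $g_k$ in the first case, or one of $g_j$, $g_k$ in the second). Your closing remark about why $j\ne k$ is essential is exactly the point implicit in the paper's treatment of the second case.
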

	
	\begin{proof}
		The statement of this theorem can be rewritten in technical terms as:
		The variety $ V\left(f,g \right) \ne \emptyset$ if and only if either  $V(f_{k}, g_{k})\ne \emptyset$ for some $k \in \{1,2,\ldots n\}$ or there exists $j \ne k \in \{1,2, \ldots n\}$ such that $V(g_{j}, g_{k})\ne \emptyset$.
		
		Suppose $V\left(f,g\right) \ne \emptyset $, that is $ f(\theta)$ and $g(\theta)$ have a common zero, say $\alpha $.
		$f(\alpha)= 0$ and $g(\alpha)=0$.
		Consider
		\[g(\alpha)= \displaystyle \prod_{i=1}^{n} g_{i}(\alpha) =0,\]
		then there exists some $k \in \{1,2,\ldots n\}$ such that $g_{k}(\alpha)=0.$ 
		
		Now, 
		\[f(\alpha)=  \displaystyle \sum_{i=1}^{n} \left(f_{i}(\alpha) \displaystyle \prod_{j=1	j\ne i}^{n} g_{j}(\alpha)\right) =f_{k}(\alpha)\displaystyle \prod_{j=1	j\ne k}^{n} g_{j}(\alpha),\]
		since $g_{k}(\alpha)=0$.
		
		Therefore, $f(\alpha)=0 \implies$ either $f_{k}(\alpha)=0$ or $g_{j}(\alpha)=0, j\ne k.$
		
		Hence, either  $V(f_{k}, g_{k})\ne \emptyset$ or $V(g_{j}, g_{k})\ne \emptyset$, $j\ne k$.

		Conversely,
		first suppose $V(f_{k}, g_{k})\ne \emptyset$ for some $k \in \{1,2,\ldots n\}$ and say $\alpha_{1}$ is a common zero of $f_{k}(\theta)$ and $g_{k}(\theta)$, that is, $f_{k}(\alpha_{1})=0$ and $g_{k}(\alpha_{1})=0$, then,
		\[g(\alpha_{1})=\displaystyle \prod_{i=1}^{n} g_{i}(\alpha_{1}) =0,\]
		and
		\[f(\alpha_{1})=\displaystyle \sum_{i=1}^{n} \left(f_{i}(\alpha_{1}) \displaystyle \prod_{j=1	j\ne i}^{n} g_{j}(\alpha_{1})\right) =f_{k}(\alpha_{1})\displaystyle \prod_{j=1	j\ne k}^{n} g_{j}(\alpha_{1}) =0.\]

		Therefore, $\alpha_{1}\in V(f,g) \implies V(f,g)\ne \emptyset$. 
		
		Next , suppose $V(g_{j}, g_{k})\ne \emptyset$ for some $j\ne k$ and say $\alpha_{2}$ is a common zero of  $g_{j}(\theta)$ and $g_{k}(\theta)$. Then,
		\[g(\alpha_{2})=\displaystyle \prod_{i=1}^{n} g_{i}(\alpha_{2}) =0,\]
		and
		\begin{align*}
			f(\alpha_{2})&=\displaystyle \sum_{i=1}^{n} \left(f_{i}(\alpha_{2}) \displaystyle \prod_{t=1	t\ne i}^{n} g_{t}(\alpha_{2})\right) \\
			&= \left(f_{j}(\alpha_{2})\displaystyle \prod_{t=1	t\ne j}^{n} g_{t}(\alpha_{2}) \right) + \left(f_{k}(\alpha_{2})\displaystyle \prod_{t=1	t\ne k}^{n} g_{t}(\alpha_{2})\right) =0.
		\end{align*}
		
		Hence, $\alpha_{2}\in V(f,g)$, so $V(f,g)\ne \emptyset$.
	\end{proof}
	
	\begin{corollary}
		\label{thm:3}
		If all $g_{i}(\theta)$ have distinct zeros, then $f(\theta)$ and $g(\theta)$ will have common zeros if and only if there exists some $k$ such that $f_{k}(\theta)$ and $g_{k}(\theta)$ have common zero.
	\end{corollary}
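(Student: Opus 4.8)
The plan is to derive this directly from Theorem \ref{thm:2}, using the hypothesis to discard the second alternative in that characterization. First I would pin down the reading of the phrase ``all $g_{i}(\theta)$ have distinct zeros'': it should mean that the zero sets of $g_{1},\dots,g_{n}$ are pairwise disjoint, i.e. no two distinct factors $g_{j}$ and $g_{k}$ share a root. In the notation of the previous section this says precisely that $V(g_{j},g_{k})=\emptyset$ for every pair $j\ne k$.

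With this reformulation in hand the argument is immediate. Theorem \ref{thm:2} asserts that $V(f,g)\ne\emptyset$ if and only if either $V(f_{k},g_{k})\ne\emptyset$ for some $k$, or $V(g_{j},g_{k})\ne\emptyset$ for some $j\ne k$. Under the hypothesis the second disjunct is vacuous, so the biconditional collapses to the statement that $V(f,g)\ne\emptyset$ if and only if there exists $k$ with $V(f_{k},g_{k})\ne\emptyset$. Translating back out of variety notation, $f(\theta)$ and $g(\theta)$ have a common zero if and only if $f_{k}(\theta)$ and $g_{k}(\theta)$ have a common zero for some $k$, which is exactly the claim.

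Because this is a direct specialization of the theorem just proved, there is no genuine obstacle; the only point that warrants a moment of care is confirming that ``distinct zeros'' refers to distinctness across the whole family (pairwise disjoint root sets) and not merely to each individual $g_{i}$ being squarefree, since it is the former condition that forces $V(g_{j},g_{k})=\emptyset$ and thereby eliminates the pairwise case from Theorem \ref{thm:2}.
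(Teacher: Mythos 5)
Your proposal is correct and takes essentially the same route as the paper, whose proof is simply a one-line citation of Theorem \ref{thm:2}: the hypothesis eliminates the pairwise alternative $V(g_{j},g_{k})\ne\emptyset$, leaving only the $V(f_{k},g_{k})\ne\emptyset$ case. Your care over the reading of ``distinct zeros'' as pairwise disjoint zero sets (rather than squarefreeness of each $g_{i}$) is also the interpretation the paper itself uses later, e.g.\ in Lemma \ref{thm:8} and Theorem \ref{thm:13}, where that phrase coexists with individual $g_{i}(\theta)$ having double zeros.
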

	
	\begin{proof}
		The proof of this corollary follows from \hyperref[thm:2]{Theorem \ref{thm:2}}.
	\end{proof}
	
	\begin{lemma}
		\label{thm:4}
		$f_{i}(\theta)$ and $g_{i}(\theta)$ have common zeros if and only if $g_{i}(\theta)$ has double zero, and $V(f_{i}, g_{i})$ will be a singleton set.
	\end{lemma}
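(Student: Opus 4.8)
The plan is to reduce the entire statement to one algebraic identity linking $f_i$, $g_i$, and the derivative $g_i'(\theta) = 2c_i\theta + d_i$. Expanding $c_i\,g_i(\theta) - g_i'(\theta)$ and collecting terms, I expect to obtain
\[
c_i\,g_i(\theta) - g_i'(\theta) = c_i^2\theta^2 + c_i(d_i-2)\theta + (c_i - d_i) = f_i(\theta),
\]
that is, $f_i(\theta) = c_i\,g_i(\theta) - g_i'(\theta)$. Recognizing this identity is the one genuinely non-routine step; once it is in hand, both directions of the equivalence become immediate, so I would establish it first.

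For the forward direction I would take any common zero $\alpha \in V(f_i, g_i)$, so that $g_i(\alpha) = 0$. Substituting into the identity collapses it to $f_i(\alpha) = -g_i'(\alpha)$, and since $f_i(\alpha)=0$ this forces $g_i'(\alpha) = 0$. A point where a quadratic and its derivative vanish together is exactly a double root, so $g_i$ has a double zero at $\alpha$. For the converse I would assume $g_i$ has a double zero $\alpha$, giving both $g_i(\alpha) = 0$ and $g_i'(\alpha) = 0$; feeding these into the identity yields $f_i(\alpha) = c_i\cdot 0 - 0 = 0$, so $\alpha \in V(f_i, g_i)$ and the variety is non-empty.

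It remains to argue the set is a singleton. Here I would use that a genuine quadratic $g_i$ (the case $c_i \neq 0$, which holds since $c_i = x_iy_i > 0$ for data in the first quadrant) with a double root has that root, $\alpha = -d_i/(2c_i)$, as its only zero. Since every element of $V(f_i,g_i)$ is in particular a zero of $g_i$, it must equal $\alpha$, and combined with the converse direction this gives $V(f_i,g_i) = \{\alpha\}$. For completeness I would record the degenerate case $c_i = 0$: there $g_i$ is linear, cannot have a double zero, and the identity shows $f_i \equiv -d_i$ is constant with no common zero, consistent with the statement. I expect the only real obstacle to be spotting the identity $f_i = c_i g_i - g_i'$; the rest is a short verification.
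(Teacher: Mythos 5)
Your proof is correct and takes essentially the same route as the paper: your identity $f_i(\theta) = c_i\,g_i(\theta) - g_i'(\theta)$ is exactly the paper's relation \eqref{eq:7}, $f_i(\theta) = c_i g_i(\theta) - 2c_i\theta - d_i$, and both arguments use it to turn a common zero of $(f_i,g_i)$ into simultaneous vanishing of $g_i$ and $g_i'$, i.e., a double root, with the singleton claim following since that double root is the only zero of $g_i$. Your packaging via the derivative makes the double-root step a bit more transparent than the paper's explicit computation of $\alpha = -d_i/(2c_i)$ and the discriminant condition $d_i^2 - 4c_i = 0$, but the substance is identical.
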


	\begin{proof}
		From \eqref{eq:3} and \eqref{eq:4}, $f_{i}(\theta)$ and $g_{i}(\theta)$ satisfy the following relation 
		\begin{equation}
			\label{eq:7}
			f_{i}(\theta)= c_{i}g_{i}(\theta)-2c_{i} \theta-d_{i}.
		\end{equation}
		Suppose that $f_{i}(\theta)$ and $g_{i}(\theta)$ have a common zero, say $\alpha$, then using relation \eqref{eq:7} and \eqref{eq:4},
		$\alpha=-\frac{d_{i}}{2c_{i}}$, and ${d_{i}}^2-4c_{i}=0.$
		
		Thus, $g_{i}(\theta)$ has double zero, that is, 
		\[V(g_{i})=\left\{\frac{1-x_{i}-y_{i}}{2x_{i} y_{i}},\frac{1-x_{i}-y_{i}}{2x_{i} y_{i}} \right\} \text{and}  \hspace{.9mm} V(f_{i})=\left\{ \frac{1-x_{i}-y_{i}}{2x_{i} y_{i}}, \frac{3-x_{i}-y_{i}}{2x_{i} y_{i}} \right\}. \] Therefore, \[V(f_{i},g_{i})=\left\{ -\frac{d_{i}}{2c_{i}} \right\}= \left\{\frac{1-x_{i}-y_{i}}{2x_{i} y_{i}} \right\}.\]
		
		Conversely, suppose $g_{i}(\theta)$ has double zero, by \eqref{eq:4} $V(g_{i})=\left\{-\frac{d_{i}}{2c_{i}},-\frac{d_{i}}{2c_{i}}\right\}$ and by relation \eqref{eq:7}, 
		\begin{equation*}
			f_{i} \left(-\frac{d_{i}}{2c_{i}} \right) = - 2c_{i} \left(-\frac{d_{i}}{2c_{i}} \right)-d_{i}=0,
		\end{equation*}
		and \[V(f_{i})=\left\{ -\frac{d_{i}}{2c_{i}}, \frac{4-d_{i}}{2c_{i}} \right\}.\]
		
		Hence, $V(f_{i}, g_{i})$ is a singleton set.
	\end{proof}

	\begin{lemma}
		\label{thm:5}
		At most one $g_{k}(\theta)$ will have double zero, if all pairs $(g_{i},g_{j})$ $i\ne j$ have common zeros.
	\end{lemma}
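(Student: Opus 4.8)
The plan is to argue by contradiction, exploiting the rigidity of a quadratic that possesses a double zero: by \eqref{eq:4} each $g_i(\theta)=c_i\theta^2+d_i\theta+1$ is a quadratic, and if it has a double zero then it has only \emph{one} distinct root in $\mathbb{C}$, so any common zero it shares with another polynomial is forced to equal that unique root. I would begin by assuming the hypothesis that every pair $(g_i,g_j)$, $i\ne j$, has a common zero, and then suppose toward a contradiction that two distinct polynomials $g_k(\theta)$ and $g_l(\theta)$ with $k\ne l$ each have a double zero. Denote by $\alpha$ the double zero of $g_k$ and by $\beta$ the double zero of $g_l$; as in the computation in \hyperref[thm:4]{Lemma \ref{thm:4}} (equivalently, from the discriminant condition $d_i^2=4c_i$), these are the unique roots, with $\alpha=-d_k/(2c_k)$ and $\beta=-d_l/(2c_l)$.

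The key step is to apply the hypothesis to the particular pair $(g_k,g_l)$, which must possess a common zero. Since the only zero of $g_k$ is $\alpha$ and the only zero of $g_l$ is $\beta$, this common zero must simultaneously equal $\alpha$ and $\beta$, forcing $\alpha=\beta$. Thus $g_k$ and $g_l$ are two quadratics sharing the same double root, say $\alpha$.

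Next I would show that sharing a double root forces the two polynomials to coincide. Writing $g_k(\theta)=c_k(\theta-\alpha)^2$ and $g_l(\theta)=c_l(\theta-\alpha)^2$ and comparing constant terms, which both equal $1$ by \eqref{eq:4} (note $g_i(0)=1$, so in particular $\alpha\ne 0$), gives $c_k\alpha^2=c_l\alpha^2=1$ and hence $c_k=c_l$; comparing the linear coefficients then yields $d_k=d_l$. Therefore $g_k\equiv g_l$, i.e. $x_ky_k=x_ly_l$ and $x_k+y_k=x_l+y_l$. Since the sum and product of a pair determine it up to order, this means $\{x_k,y_k\}=\{x_l,y_l\}$, which is exactly the configuration $X_k=X_l$, or $X_k=(x_l,y_l)^\top,\ X_l=(y_l,x_l)^\top$, excluded by the Note. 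This contradiction establishes that at most one $g_k(\theta)$ can have a double zero.

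The main obstacle, such as it is, lies in this last step: translating the polynomial identity $g_k\equiv g_l$ back into a statement about the data points and recognizing it as precisely the degenerate configuration ruled out by the standing assumption in the Note. The earlier steps are essentially forced once one observes that a quadratic with a double zero has a single root, so that any shared zero must be that root.
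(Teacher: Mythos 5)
Your proof is correct and takes essentially the same route as the paper's: argue by contradiction, apply the hypothesis to the particular pair $(g_k,g_l)$ to force their double zeros to coincide, and conclude that this contradicts the standing assumption on the data. The paper's proof merely asserts that identical zero sets are ``not valid for the given data,'' whereas you make this final step explicit by showing $g_k\equiv g_l$ (via the common constant term $1$, so $c_k=c_l$ and $d_k=d_l$) and hence $\{x_k,y_k\}=\{x_l,y_l\}$, which is exactly the configuration excluded in the Note.
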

	
	\begin{proof}
		Suppose $g_{k}(\theta)$ and $g_{l}(\theta)$ $k\ne l$ both have double zero, and it is given that all pairs $(g_{i},g_{j})$ $i\ne j$ have common zeros, then all zeros of $g_{k}(\theta)$ and $g_{l}(\theta)$ are same, which is not valid for the given data.
		
		Hence, by contradiction, the statement is true.
	\end{proof}

	\begin{note}
		For the given data, $V(g_{i},g_{j})$ is either empty or a singleton set for any $i\ne j \in \{1,2,\ldots,n\}$.
	\end{note}
	
	Let $g_{1}(\theta), g_{2}(\theta), g_{3}(\theta)$ be three polynomials as defined in \eqref{eq:4} and $V(g_{1})=\{ \alpha, \alpha_{1}\}$, $V(g_{2})=\{ \alpha, \alpha_{2}\}$, $V(g_{3})=\{ \alpha_{1},\alpha_{2}\}$, then $V(g_{i},g_{j})\ne \emptyset$ $\forall i\ne j \in \{1,2,3\}$ but $V(g_{1},g_{2},g_{3})= \emptyset$.	This phenomenon is not true for $n\ge 4$.

	\begin{theorem}
		\label{thm:6}
		For $n\ge 4$, $V(g_{1},g_{2},\ldots,g_{n})$ is non-empty and a singleton set, if every pair $(g_{i},g_{j})$ $i\ne j$ have common zeros.
	\end{theorem}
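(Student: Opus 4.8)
The plan is to translate the algebra into a simple combinatorial statement about edges of a graph. For each $i$ regard $V(g_{i})$ as a set of at most two complex numbers, i.e.\ as an edge on the vertex set consisting of all roots that occur among the $g_{i}$; the hypothesis that every pair $(g_{i},g_{j})$ has a common zero then says precisely that these edges pairwise meet, and the goal becomes to show that for $n\ge 4$ all of them pass through one single common vertex, and that this vertex is unique.

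First I would dispose of the degenerate case using Lemma \ref{thm:5}. By that lemma at most one $g_{k}$ can have a double zero. If some $g_{k}$ does, say $V(g_{k})=\{\alpha\}$, then for every $i$ the pair $(g_{i},g_{k})$ has a common zero, and since $V(g_{i},g_{j})$ is empty or a singleton for the given data this common zero must be $\alpha$; hence $\alpha\in V(g_{i})$ for all $i$, so $\alpha$ is already a common zero. Thus I may assume henceforth that every $g_{i}$ has two distinct roots.

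In this generic case I would argue by contradiction. Suppose no root is shared by all $g_{i}$, and fix $V(g_{1})=\{\alpha,\beta\}$. Since neither $\alpha$ nor $\beta$ lies in every $V(g_{i})$, there is a $g_{j}$ with $\alpha\notin V(g_{j})$ and a $g_{k}$ with $\beta\notin V(g_{k})$; intersecting each with $g_{1}$ forces $\beta\in V(g_{j})$ and $\alpha\in V(g_{k})$, say $V(g_{j})=\{\beta,\gamma\}$ and $V(g_{k})=\{\alpha,\delta\}$. The pair $(g_{j},g_{k})$ must also meet, and as $\alpha\ne\beta$ the only way this can happen is $\gamma=\delta=:\epsilon$, producing exactly the triangle $\{\alpha,\beta\},\{\beta,\epsilon\},\{\alpha,\epsilon\}$ of the $n=3$ example preceding the theorem. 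The crucial step is to rule out a fourth polynomial: any $g_{m}$ meets all three of these edges, and a two-element set that meets every edge of a triangle must contain at least two of the vertices $\alpha,\beta,\epsilon$, so $V(g_{m})$ coincides with one of the three root sets already listed.

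To close the argument I would invoke the data restriction stated in the opening Note. Two polynomials $g_{m}$ and $g_{i}$ have identical root sets exactly when $c_{m}=c_{i}$ and $d_{m}=d_{i}$, i.e.\ when $x_{m}y_{m}=x_{i}y_{i}$ and $x_{m}+y_{m}=x_{i}+y_{i}$, which forces $\{x_{m},y_{m}\}=\{x_{i},y_{i}\}$; this is precisely the excluded data $X_{m}=X_{i}$ or $X_{m}=(y_{i},x_{i})^{\top}$. Hence no genuinely new $g_{m}$ can be adjoined, the triangle supports at most three distinct polynomials, and so $n\le 3$, contradicting $n\ge 4$. Therefore a common zero exists. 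The same data restriction yields uniqueness: if two distinct roots were common to all $g_{i}$, every $g_{i}$ would have the same root pair and hence be the same polynomial, again excluded. Thus $V(g_{1},g_{2},\ldots,g_{n})$ is a nonempty singleton. I expect the main difficulty to be the rigorous elimination of the fourth edge, that is, converting the statement that a $2$-element set hits every edge of a triangle into the duplication of an existing root pair and then invoking the data hypothesis to finish.
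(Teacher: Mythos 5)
Your proof is correct, and it reaches the conclusion by a genuinely different route from the paper's. Both arguments turn on the same combinatorial fact---pairwise-intersecting two-element root sets must either pass through a common point (a star) or form a triangle, and a triangle admits no fourth pairwise-intersecting edge---but the paper establishes this by an explicit case analysis at $n=4$ followed by induction on $n$ (adjoining $g_{m+1}$ to an already-established star), whereas you argue directly for arbitrary $n\ge 4$: assuming no common zero, you extract a triangle from three of the $g_i$'s and show that any fourth polynomial would have to duplicate one of the triangle's edges. Your version is also more careful at two points the paper glosses over. First, you dispose of the case where some $g_k$ has a double zero (via Lemma \ref{thm:5}); the paper's proof tacitly assumes all roots are simple and in effect defers that situation to Theorem \ref{thm:7}. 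Second, and more substantively, the paper's $n=4$ case analysis asserts, for instance, that $\beta_1=\alpha$ forces $V(g_3,g_4)=\emptyset$, which is true only if $V(g_4)$ is not permitted to coincide with $V(g_1)$ or $V(g_2)$; you make this exclusion explicit by showing that equal root sets force $c_m=c_i$ and $d_m=d_i$, hence $\{x_m,y_m\}=\{x_i,y_i\}$, which is exactly the data ruled out by the paper's opening Note. What the paper's induction buys in exchange is the explicit star structure $V(g_i)=\{\alpha,\alpha_i\}$ with all $\alpha_i$ distinct, slightly more information than the bare statement; your argument obtains the singleton conclusion more cheaply, since two common zeros would force all the $g_i$ to have identical root sets and hence come from excluded data.
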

	
	\begin{proof} 
		
		When $n= 4$ and $V(g_{i},g_{j})\ne \emptyset$ $\forall i\ne j \in \{1,2,3,4\}$, then $V(g_{i},g_{j})$ will be a singleton set for every $i\ne j \in \{1,2,3,4\}$.
		
		Assume as above $V(g_{1})=\{ \alpha, \alpha_{1}\}$, $V(g_{2})=\{ \alpha, \alpha_{2}\}$, $V(g_{3})=\{ \alpha_{1},\alpha_{2}\}$. Now, let $V(g_{4})=\{\beta_{1},\beta_{2}\}$.
		
		Since $V(g_{1},g_{4})\ne \emptyset$ $\implies$ $ \beta_{1} = \alpha$ or $\alpha_{1}$.
        
		    \case {$ \beta_{1} = \alpha$,} then $V(g_{1},g_{4})\ne \emptyset$ and $V(g_{2},g_{4})\ne \emptyset$, but $V(g_{3},g_{4})= \emptyset$.
			\item \case {$ \beta_{1} = \alpha_{1}$,} then $V(g_{1},g_{4})\ne \emptyset$ and $V(g_{3},g_{4})\ne \emptyset$, but $V(g_{2},g_{4})= \emptyset$.

		Thus, $\alpha \in V(g_{3})$ and $ \beta_{1} = \alpha$ will only make $V(g_{i},g_{j})\ne \emptyset$ $\forall i\ne j \in \{1,2,3,4\}$.
		
		Therefore, $V(g_{i})=\{ \alpha,\alpha_{i}\}$, $\forall i\in \{1,2,3,4\}$ with $\alpha$ and all $\alpha_{i}$'s being distinct and $V(g_{1},g_{2},g_{3},g_{4}) =\{\alpha \}$.
		
		Next, assume this statement is true for any fixed $m\ge4$, that is if $V(g_{i},g_{j})\ne \emptyset$ $\forall i\ne j\in \{1,2,\ldots,m\}$, then $V(g_{1},g_{2},\ldots,g_{m}) =\{\alpha\}$. Now, if one more polynomial $g_{m+1}(\theta)$ is added such that $V(g_{i},g_{j})\ne \emptyset$ $\forall i\ne j \in \{1,2,\ldots,m,m+1\}$, then 
		\begin{align*}
			V(g_{1},g_{2},\ldots,g_{m},g_{m+1}) &= V(g_{1},g_{2},\ldots,g_{m})\cap V(g_{m+1})\\
			&=\{\alpha\} \cap \{\beta_{1},\beta_{2}\}= \emptyset.
		\end{align*}
		If $\alpha$ is distinct from $\beta_{1}$ and $\beta_{2}$, then $g_{m+1}(\theta)$ can have common zeros with at most two polynomials only, since $\beta_{1} \ne \beta_{2}$ are only two distinct zeros, which contradicts the hypothesis that $V(g_{i},g_{m+1}) \ne \emptyset$ $\forall i \in \{1,2,\ldots,m\}$. 
		
		Therefore, $V(g_{1},g_{2},\ldots,g_{m},g_{m+1}) \ne \emptyset$ and $V(g_{1},g_{2},\ldots,g_{m},g_{m+1}) =\{\alpha\}$.
		
		Hence, this result is true for all $n\ge4$.
	\end{proof}

	\begin{theorem}
		\label{thm:7}
		$V(g_{1},g_{2},\ldots,g_{n})$ is non-empty and a singleton set, if all pairs $(g_{i},g_{j})$ $i\ne j$ have common zeros and one $g_{k}(\theta)$ has double zero.
	\end{theorem}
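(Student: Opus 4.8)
The plan is to exploit the double-zero hypothesis to collapse $V(g_k)$ to a single distinct point, which then forces that point to be a root of every $g_i$. Suppose $g_k(\theta)$ is the polynomial with a double zero. By \eqref{eq:4} (and as recorded in \hyperref[thm:4]{Lemma \ref{thm:4}}) its two zeros coincide, so as a set $V(g_k)=\{\alpha\}$ with $\alpha=-d_k/(2c_k)$ its unique root. The essential observation is that, unlike a generic $g_j$ which carries two distinct roots, $g_k$ offers only the single value $\alpha$ as a candidate for a common zero with any other polynomial.

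First I would establish non-emptiness. Fix any $j\ne k$. By hypothesis the pair $(g_j,g_k)$ has a common zero, and by the Note that $V(g_i,g_j)$ is either empty or a singleton for $i\ne j$, the set $V(g_j,g_k)$ is then a singleton. Since every element of $V(g_j,g_k)$ lies in $V(g_k)=\{\alpha\}$, the common zero must be $\alpha$, whence $\alpha\in V(g_j)$. As $j\ne k$ was arbitrary and trivially $\alpha\in V(g_k)$, the point $\alpha$ is a zero of all of $g_1,\ldots,g_n$, so
\[
\alpha\in V(g_1,g_2,\ldots,g_n)\quad\Longrightarrow\quad V(g_1,g_2,\ldots,g_n)\ne\emptyset.
\]

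For the singleton claim, suppose $\beta\in V(g_1,g_2,\ldots,g_n)$. Then in particular $\beta\in V(g_k)=\{\alpha\}$, forcing $\beta=\alpha$. Hence $V(g_1,g_2,\ldots,g_n)=\{\alpha\}$ is a singleton, which completes the argument.

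The point worth emphasizing is that this reasoning is uniform in $n$ and, in contrast with \hyperref[thm:6]{Theorem \ref{thm:6}}, needs no lower bound such as $n\ge4$. The cyclic configuration $V(g_1)=\{\alpha,\alpha_1\}$, $V(g_2)=\{\alpha,\alpha_2\}$, $V(g_3)=\{\alpha_1,\alpha_2\}$ that produced an empty triple intersection at $n=3$ is excluded precisely because a double zero leaves $g_k$ with only one distinct root, ruling out such a pairwise-matched-but-globally-mismatched cycle. I do not anticipate a genuine obstacle here; the only care required is the appeal to the Note, which guarantees that each pairwise common zero is unique and therefore equal to $\alpha$.
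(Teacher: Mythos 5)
Your proof is correct and takes essentially the same route as the paper's: the double zero $\alpha$ of $g_{k}(\theta)$ is the only possible common zero for any pair involving $g_{k}$, so the pairwise hypothesis forces $\alpha$ into every $V(g_{i})$, and membership of any common zero in $V(g_{k})=\{\alpha\}$ gives the singleton claim. Your write-up is slightly more explicit than the paper's on the singleton step (the appeal to the Note is harmless but unnecessary, since any common zero of $(g_{j},g_{k})$ automatically lies in $V(g_{k})=\{\alpha\}$).
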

	\begin{proof}
		Suppose $\alpha$ be a double zero of $g_{k}(\theta)$ and all pairs $(g_{i},g_{j})$ $i\ne j$ have common zeros, then this $\alpha$ will be zero of every $g_{i}(\theta)$ $i\ne k$, hence $\alpha$ is a zero of every $g_{i}(\theta)$ and none of the other $g_{i}(\theta)$ can have double zero, for the given data.
		
		Hence, $V(g_{1},g_{2},\ldots,g_{n})=\{\alpha\} \ne \emptyset$.
	\end{proof}
	
	In this result, there is no restriction on sample size compared to \hyperref[thm:6]{Theorem \ref{thm:6}}, but has one more condition of double zero of $g_{k}(\theta)$.

	\section{The multiplicity of common zeros of \texorpdfstring{$f(\theta)$}{} and \texorpdfstring{$g(\theta)$}{}}
	As mentioned in section 3, the ML-degree of the association parameter $\theta$ of GBED($\theta$) will be the number of elements in $V(f)\setminus V(f,g)$, counted with multiplicity. Since $f(\theta)$ has $2n$ zeros in $\mathbb{C}$ (counted with multiplicity), thus $V(f,g)$ is of concern for the calculation of the ML-degree. In the previous section, all the possibilities for $V(f,g)\ne \emptyset$ have been discussed. In this section, the multiplicity of every element of $V(f,g)$ is counted in $f(\theta)$.

	\begin{lemma}
		\label{thm:8}
		The multiplicity of a common zero $($say $\alpha)$ of $f(\theta)$ and $g(\theta)$ in $f(\theta)$ is one, if $\alpha$ is a double zero of $g_{k}(\theta)$ and all $g_{i}(\theta)$ have distinct zeros.
	\end{lemma}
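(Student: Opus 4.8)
The plan is to exploit the factored structure of $f(\theta)$ together with the precise description of the zeros of $f_k$ and $g_k$ furnished by \hyperref[thm:4]{Lemma \ref{thm:4}}. Since $\alpha$ is a double zero of $g_k(\theta)$, I would first write $g_k(\theta)=c_k(\theta-\alpha)^2$, so that $(\theta-\alpha)^2$ divides $g_k(\theta)$; moreover, because all $g_i(\theta)$ have distinct zeros, $\alpha$ is not a zero of any $g_i(\theta)$ with $i\ne k$, whence $\prod_{j\ne k} g_j(\alpha)\ne 0$. By \hyperref[thm:4]{Lemma \ref{thm:4}} the common zero $\alpha=-d_k/(2c_k)$ of $f_k$ and $g_k$ is a \emph{simple} zero of $f_k(\theta)$, since $V(f_k)=\{-d_k/(2c_k),\,(4-d_k)/(2c_k)\}$ consists of two distinct points (the equality $-d_k=4-d_k$ never holds).

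Next I would split the defining sum \eqref{eq:5} as $f(\theta)=f_k(\theta)\prod_{j\ne k} g_j(\theta)+\sum_{i\ne k} f_i(\theta)\prod_{j\ne i} g_j(\theta)$. For every index $i\ne k$ the inner product $\prod_{j\ne i} g_j(\theta)$ still contains the factor $g_k(\theta)$, so each such term is divisible by $g_k(\theta)=c_k(\theta-\alpha)^2$, and hence the entire sum over $i\ne k$ is divisible by $(\theta-\alpha)^2$. It then remains to analyze the single surviving term $f_k(\theta)\prod_{j\ne k} g_j(\theta)$: as $f_k$ has a simple zero at $\alpha$ and $\prod_{j\ne k} g_j(\alpha)\ne 0$, this term has multiplicity exactly one at $\alpha$. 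Writing it as $(\theta-\alpha)h(\theta)$ with $h(\alpha)\ne 0$, I obtain $f(\theta)=(\theta-\alpha)\left[h(\theta)+(\theta-\alpha)H(\theta)\right]$ for a suitable polynomial $H$, and the bracket evaluates to $h(\alpha)\ne 0$ at $\theta=\alpha$. This shows $\alpha$ is a simple zero of $f(\theta)$, i.e.\ its multiplicity in $f$ is one. Equivalently, one may verify $f(\alpha)=0$ and $f'(\alpha)=f_k'(\alpha)\prod_{j\ne k} g_j(\alpha)\ne 0$ by direct differentiation, the key point being that every contribution from the indices $i\ne k$ vanishes at $\alpha$ because the double factor $g_k$ satisfies $g_k'(\alpha)=0$.

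The main thing to get right is the contrast in multiplicities at the single point $\alpha$: $g_k$ vanishes there to order two while $f_k$ vanishes only to order one. It is precisely this mismatch, guaranteed by \hyperref[thm:4]{Lemma \ref{thm:4}}, that pushes the multiplicity of $\alpha$ in $f(\theta)$ down to one rather than two, since the order-two contributions coming from all $i\ne k$ are dominated by the order-one contribution of the $i=k$ term. I would therefore organize the argument so that the distinctness hypothesis is used exactly once, to secure $\prod_{j\ne k} g_j(\alpha)\ne 0$, and the double-zero hypothesis is used exactly once, to secure both $(\theta-\alpha)^2\mid g_k$ and the simplicity of $\alpha$ in $f_k$.
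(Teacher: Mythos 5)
Your proposal is correct and takes essentially the same route as the paper's own proof: the same splitting of $f(\theta)=f_{k}(\theta)\prod_{j\ne k}g_{j}(\theta)+\sum_{i\ne k}f_{i}(\theta)\prod_{j\ne i}g_{j}(\theta)$, with the $i\ne k$ terms absorbing $(\theta-\alpha)^{2}$ through the factor $g_{k}$, and the $i=k$ term contributing exactly one factor of $(\theta-\alpha)$ because $\alpha$ is a simple zero of $f_{k}$ (via Lemma \ref{thm:4}) and $\prod_{j\ne k}g_{j}(\alpha)\ne 0$ by the distinct-zeros hypothesis. Your explicit remarks that the two zeros of $f_{k}$ are distinct (since $-d_{k}\ne 4-d_{k}$) and the alternative verification $f'(\alpha)=f_{k}'(\alpha)\prod_{j\ne k}g_{j}(\alpha)\ne 0$ are small refinements the paper leaves implicit, not a different argument.
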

	
	\begin{proof}
		Given that $\alpha$ is a double zero of $g_{k}(\theta)$, by \hyperref[thm:4]{Lemma \ref{thm:4}}, $V(f_{k},g_{k})=\{\alpha\}$, hence by \hyperref[thm:2]{Theorem \ref{thm:2}}, $V(f,g)\ne \emptyset$ and $\alpha \in V(f,g)$. 
		
		Next, the multiplicity of $\alpha$ is counted in $f(\theta)$.
		
		Suppose $\beta_{k}$ is other zero of $f_{k}(\theta)$, then 
		\begin{align*}
			f(\theta)& =\displaystyle \sum_{i=1}^{n} \left(f_{i}(\theta) \displaystyle \prod_{j=1	j\ne i}^{n} g_{j}(\theta)\right)\\
			&=	f_{k}(\theta) \left(\displaystyle \prod_{j=1 j\ne k}^{n} g_{j}(\theta)\right)+ \displaystyle \sum_{i=1 i\ne k}^{n} \left(f_{i}(\theta) \displaystyle \prod_{j=1	j\ne i}^{n} g_{j}(\theta)\right),
		\end{align*}
		or
		\begin{multline*}
			f(\theta)= (\theta -\alpha) (\theta-\beta_{k}) \left( \prod_{j=1,j\ne k}^{n} g_{j}(\theta) \right)\\
			+ (\theta-\alpha)^2 \left(\sum_{i=1,i\ne k}^{n} \left(f_{i}(\theta)\prod_{j=1,j\ne i,k}^{n} g_{j}(\theta)\right) \right), 
		\end{multline*}
		or
		\begin{equation*}
			f(\theta)= (\theta-\alpha) h(\theta),
		\end{equation*}	              
		where 
		\begin{equation*}
			h(\theta)=(\theta-\beta_{k}) \left( \prod_{j=1,j\ne k}^{n} g_{j}(\theta) \right) + (\theta-\alpha) \left(\sum_{i=1,i\ne k}^{n} \left(f_{i}(\theta)\prod_{j=1,j\ne i,k}^{n} g_{j}(\theta)\right) \right)
		\end{equation*}
		does not have $\alpha $ as a zero, since $V(g_{i},g_{k})= \emptyset$ $\forall i\ne k$.
		
		Hence, multiplicity of $\alpha$ in $f(\theta)$ is one.
	\end{proof}

	\begin{lemma}
		\label{thm:9}
		The multiplicity of a common zero $($say $\alpha)$ of $f(\theta)$ and $g(\theta)$ in $f(\theta)$ is $n_{1}-1$, if none of the $g_{i}(\theta)$ have double zeros and $\alpha$ is a common zero of exactly $n_{1}$ $g_{i}(\theta)$'s $(2\le n_{1} \le n)$.
	\end{lemma}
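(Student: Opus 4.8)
The plan is to localize at $\alpha$ and track the order of vanishing of each summand of $f(\theta)$ in \eqref{eq:5} separately, then show that the lowest-order contributions do not cancel. After relabelling, I would assume $\alpha$ is a zero of $g_{1},\ldots,g_{n_{1}}$ and not a zero of $g_{n_{1}+1},\ldots,g_{n}$. Because no $g_{i}(\theta)$ has a double zero, $\alpha$ is a \emph{simple} zero of each of $g_{1},\ldots,g_{n_{1}}$, and by \hyperref[thm:4]{Lemma \ref{thm:4}} the pair $f_{i},g_{i}$ shares no root; hence $f_{i}(\alpha)\ne 0$ for every $i\le n_{1}$.

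Next I would compute the order at $\alpha$ of each term $f_{i}(\theta)\prod_{j\ne i}g_{j}(\theta)$. For $i\le n_{1}$ the product omits the factor $g_{i}$, leaving $n_{1}-1$ simple zeros at $\alpha$, and since $f_{i}(\alpha)\ne 0$ the term has order exactly $n_{1}-1$. For $i>n_{1}$ the product retains all of $g_{1},\ldots,g_{n_{1}}$, so the term has order at least $n_{1}$. Splitting the sum accordingly, $f(\theta)$ has order at least $n_{1}-1$ at $\alpha$, and the coefficient of $(\theta-\alpha)^{n_{1}-1}$ comes only from the terms with $i\le n_{1}$. Writing $g_{j}(\theta)=(\theta-\alpha)\,\tilde g_{j}(\theta)$ with $\tilde g_{j}(\alpha)=g_{j}'(\alpha)\ne 0$ for $j\le n_{1}$, this coefficient equals
\[
C=\Bigl(\prod_{j>n_{1}}g_{j}(\alpha)\Bigr)\sum_{i=1}^{n_{1}}f_{i}(\alpha)\prod_{\substack{j\le n_{1}\\ j\ne i}}g_{j}'(\alpha).
\]

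The decisive step is to show $C\ne 0$, which is where any cancellation would have to be ruled out. Here I would exploit the relation \eqref{eq:7}, $f_{i}(\theta)=c_{i}g_{i}(\theta)-2c_{i}\theta-d_{i}$, together with $g_{i}'(\theta)=2c_{i}\theta+d_{i}$. Evaluating at the zero $\alpha$ of $g_{i}$ gives the clean identity $f_{i}(\alpha)=-(2c_{i}\alpha+d_{i})=-g_{i}'(\alpha)$ for each $i\le n_{1}$, so that $f_{i}(\alpha)/g_{i}'(\alpha)=-1$. Factoring $\prod_{j\le n_{1}}g_{j}'(\alpha)$ out of the sum then yields
\[
C=-n_{1}\Bigl(\prod_{j=1}^{n_{1}}g_{j}'(\alpha)\Bigr)\Bigl(\prod_{j>n_{1}}g_{j}(\alpha)\Bigr),
\]
which is nonzero because every factor is nonzero and $n_{1}\ge 2$. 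Since the coefficient of $(\theta-\alpha)^{n_{1}-1}$ in $f(\theta)$ is nonzero while no lower power of $(\theta-\alpha)$ appears, the multiplicity of $\alpha$ in $f(\theta)$ is exactly $n_{1}-1$. The only subtlety I anticipate is bookkeeping: confirming that the $i>n_{1}$ terms genuinely land in order at least $n_{1}$—which they do, since each such product keeps all $n_{1}$ vanishing factors—so that they cannot perturb the coefficient $C$.
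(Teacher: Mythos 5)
Your proof is correct, and it is worth comparing with the paper's own argument because the two diverge at precisely the step that carries all the weight. The paper uses the same decomposition of $f(\theta)$ from \eqref{eq:5} that you do: after relabelling, the terms with $i\le n_{1}$ vanish at $\alpha$ to order exactly $n_{1}-1$, the terms with $i> n_{1}$ to order at least $n_{1}$, and so $f(\theta)=(\theta-\alpha)^{n_{1}-1}h(\theta)$ where $h(\alpha)$ is (up to the leading constants $c_{j}$, which the paper silently drops when it writes $g_{j}(\theta)$ as $(\theta-\alpha)(\theta-\alpha_{j})$) the quantity you call $C$. At that point the paper simply asserts that $h(\alpha)\ne 0$ ``since $V(f_{i},g_{i})=\emptyset$ for $i\le n_{1}$ and $\alpha$ is not a zero of the remaining $g_{j}$.'' But that reasoning only shows that each individual summand of $h(\alpha)$ is nonzero, and a sum of nonzero complex numbers can perfectly well vanish; the paper never rules out this cancellation. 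Your argument supplies exactly the missing step: combining \eqref{eq:7} with $g_{i}'(\theta)=2c_{i}\theta+d_{i}$ gives $f_{i}(\theta)=c_{i}g_{i}(\theta)-g_{i}'(\theta)$, hence $f_{i}(\alpha)=-g_{i}'(\alpha)$ at every zero $\alpha$ of $g_{i}$, so all $n_{1}$ summands in your $C$ are equal to the \emph{same} value $-\prod_{j\le n_{1}}g_{j}'(\alpha)\prod_{j>n_{1}}g_{j}(\alpha)$, and therefore
\begin{equation*}
C=-n_{1}\left(\prod_{j=1}^{n_{1}}g_{j}'(\alpha)\right)\left(\prod_{j>n_{1}}g_{j}(\alpha)\right)\ne 0,
\end{equation*}
since simplicity of the zeros gives $g_{j}'(\alpha)\ne 0$ and $n_{1}\ne 0$ in characteristic zero. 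So what your route buys is rigor: the conclusion that the multiplicity is \emph{exactly} $n_{1}-1$, rather than merely at least $n_{1}-1$, genuinely requires this identity (or some substitute), and your version is best viewed as a repaired proof of the lemma rather than an alternative one. The bookkeeping concern you raise at the end --- that the $i>n_{1}$ terms cannot contaminate the coefficient of $(\theta-\alpha)^{n_{1}-1}$ --- is handled identically in both proofs and is not an issue.
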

	
	\begin{proof}
		Suppose $\alpha$ is a common zero of exactly $n_{1}$ $g_{i}(\theta)$, say \[g_{1}(\theta), g_{2}(\theta), \ldots, g_{n_{1}}(\theta).\] Since $n_{1} \ge 2 $, at least one pair $(g_{i},g_{j})$ have a common zero, then by \hyperref[thm:2]{Theorem \ref{thm:2}}, $V(f,g)\ne \emptyset$ and $\alpha \in V(f,g)$. To count the multiplicity of $\alpha$ in $f(\theta)$, suppose $\alpha_{i}$ ($\alpha \ne \alpha_{i}$) are the other zeros of $g_{i}(\theta)$ $\forall i\in \{1,2,\ldots, n_{1}\}$, then
		\begin{align*}
			f(\theta)& =\displaystyle \sum_{i=1}^{n} \left(f_{i}(\theta) \displaystyle \prod_{j=1	j\ne i}^{n} g_{j}(\theta)\right)\\
			&=\displaystyle \sum_{i=1}^{n_{1}} \left(f_{i}(\theta) \displaystyle \prod_{j=1	j\ne i}^{n} g_{j}(\theta)\right)+\displaystyle \sum_{i=n_{1}}^{n} \left(f_{i}(\theta) \displaystyle \prod_{j=1	j\ne i}^{n} g_{j}(\theta)\right),
		\end{align*}
		or
		\begin{multline*}
			f(\theta)=	\displaystyle \sum_{i=1}^{n_{1}} \left(f_{i}(\theta) \left(\displaystyle \prod_{j=1	j\ne i}^{n_{1}} g_{j}(\theta)\right) \left(\displaystyle \prod_{j=n_{1}}^{n} g_{j}(\theta)\right) \right)\\
			+\displaystyle \sum_{i=n_{1}}^{n} \left( f_{i}(\theta) \left( \displaystyle \prod_{j=1}^{n_{1}} g_{j}(\theta)\right) \left(\displaystyle \prod_{j=n_{1}	j\ne i}^{n} g_{j}(\theta)\right)\right),
		\end{multline*}
		or
		\begin{multline*}
			f(\theta)= \left(\theta-\alpha \right)^{n_{1}-1} \left(\displaystyle \sum_{i=1}^{n_{1}} \left(f_{i}(\theta)  \left(\displaystyle \prod_{j=1 j\ne i}^{n_{1}}(\theta-\alpha_{j})\right) \left(\displaystyle \prod_{j=n_{1}}^{n} g_{j}(\theta)\right) \right) \right)\\
			+  \left(\theta-\alpha \right)^{n_{1}}  \left(\displaystyle \sum_{i=n_{1}}^{n} \left(f_{i}(\theta) \left(\displaystyle \prod_{j=1}^{n_{1}}(\theta-\alpha_{j})\right) \left(\displaystyle \prod_{j=n_{1}, j\ne i}^{n} g_{j}(\theta)\right) \right) \right),
		\end{multline*}
		or
		\begin{equation*}
			f(\theta)= (\theta-\alpha)^{n_{1}-1} h(\theta),
		\end{equation*}
		where
		\begin{multline*}
			h(\theta)=\displaystyle \sum_{i=1}^{n_{1}} \left(f_{i}(\theta) \left( \displaystyle \prod_{j=1 j\ne i}^{n_{1}}(\theta-\alpha_{j})\right) \left(\displaystyle \prod_{j=n_{1}}^{n} g_{j}(\theta)\right) \right)\\
			+  \left(\theta-\alpha \right)  \left(\displaystyle \sum_{i=n_{1}}^{n} \left(f_{i}(\theta) \left(\displaystyle \prod_{j=1}^{n_{1}}(\theta-\alpha_{j}) \right) \left(\displaystyle \prod_{j=n_{1}, j\ne i}^{n} g_{j}(\theta) \right) \right) \right),
		\end{multline*}
		does not have $\alpha $ as a zero, since $V(f_{i},g_{i})=\emptyset$ $\forall i \in \{1,2,\ldots,n_{1}\}$ and $\alpha$ is not a zero of any $g_{i}(\theta)$ other than $n_{1}$ $g_{i}(\theta)$.
		
		Hence, the multiplicity of $\alpha$ in $f(\theta)$ is $n_{1}-1$.
	\end{proof}

	\begin{lemma}
		\label{thm:10}
		The multiplicity of a common zero $($say $\alpha)$ of $f(\theta)$ and $g(\theta)$ in $f(\theta)$ is $n_{1}$, if $\alpha$ is a common zero of exactly $n_{1}$ $g_{i}(\theta)$'s with one of the $g_{i}(\theta)$ having double zero $(2\le n_{1} \le n)$.
	\end{lemma}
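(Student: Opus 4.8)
The plan is to reuse the bookkeeping of the previous lemma \hyperref[thm:9]{(Lemma \ref{thm:9})}: I would factor $(\theta-\alpha)$ out of $f(\theta)$ as many times as the order-counting permits and then verify that the residual factor does not vanish at $\alpha$. The new ingredient is that one vanishing factor, say $g_k$, carries $\alpha$ as a \emph{double} zero, which both raises the order contributed by $g(\theta)$ and, by \hyperref[thm:4]{Lemma \ref{thm:4}}, forces $f_k(\alpha)=0$. First I would record the local picture at $\alpha$: relabel so that $g_1,\dots,g_{n_1}$ are exactly the factors vanishing at $\alpha$, with $g_k$ having the double zero $\alpha=-d_k/(2c_k)$ and each remaining $g_i$ ($i\le n_1$, $i\ne k$) having a simple zero at $\alpha$ together with a distinct second zero $\alpha_i$. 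By \hyperref[thm:4]{Lemma \ref{thm:4}}, $f_k(\alpha)=0$ with $\alpha$ a \emph{simple} zero of $f_k$ (the other zero being $\beta_k=(4-d_k)/(2c_k)$), while $f_i(\alpha)\ne 0$ for every simple-zero index $i$; moreover \hyperref[thm:2]{Theorem \ref{thm:2}} certifies $\alpha\in V(f,g)$.

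Next I would count the order of $(\theta-\alpha)$ in each summand $f_i(\theta)\prod_{j\ne i}g_j(\theta)$ of \eqref{eq:5}. Writing $g_k(\theta)=c_k(\theta-\alpha)^2$, $g_i(\theta)=c_i(\theta-\alpha)(\theta-\alpha_i)$ for $i\le n_1,\ i\ne k$, and $g_i(\alpha)\ne 0$ for $i>n_1$, the product $\prod_{j\ne i}g_j$ vanishes to order $n_1-1$ when $i=k$, to order $n_1$ when $i\le n_1$ with $i\ne k$, and to order $n_1+1$ when $i>n_1$. Multiplying by $f_i$ — which supplies an extra factor $(\theta-\alpha)$ only for $i=k$, since $f_i(\alpha)\ne 0$ at the simple-zero indices — every summand with $i\le n_1$ vanishes to order exactly $n_1$, while those with $i>n_1$ vanish to order at least $n_1+1$. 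Hence $f(\theta)=(\theta-\alpha)^{n_1}h(\theta)$, and it remains to show $h(\alpha)\ne 0$.

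The crux is evaluating $h(\alpha)$, i.e. the coefficient of $(\theta-\alpha)^{n_1}$ in $f$, to which only the $n_1$ summands with $i\le n_1$ contribute. The term $i=k$ contributes $c_k(\alpha-\beta_k)\prod_{l\le n_1,\,l\ne k}c_l(\alpha-\alpha_l)\cdot\prod_{l>n_1}g_l(\alpha)$, whereas each term $i=m$ ($m\le n_1$, $m\ne k$) contributes $f_m(\alpha)\,c_k\prod_{l\le n_1,\,l\ne m,k}c_l(\alpha-\alpha_l)\cdot\prod_{l>n_1}g_l(\alpha)$. The key simplification I would exploit is that relation \eqref{eq:7} gives $f_m(\alpha)=-2c_m\alpha-d_m=c_m(\alpha_m-\alpha)$ for a simple-zero index $m$ (using $\alpha+\alpha_m=-d_m/c_m$); therefore $f_m(\alpha)/c_m=-(\alpha-\alpha_m)$ reabsorbs the one missing linear factor, so that each $m$-term collapses to the common product $-\prod_{l\le n_1,\,l\ne k}(\alpha-\alpha_l)$ up to the nonzero constants. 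Combining the $n_1-1$ such terms with the $i=k$ term yields
\begin{equation*}
 h(\alpha)=\Big(\prod_{l\le n_1}c_l\Big)\Big(\prod_{l>n_1}g_l(\alpha)\Big)\big[(\alpha-\beta_k)-(n_1-1)\big]\prod_{l\le n_1,\,l\ne k}(\alpha-\alpha_l).
\end{equation*}

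Finally I would close the argument by a sign check. Since $g_k$ has the double zero $\alpha=-d_k/(2c_k)$ and $\beta_k=(4-d_k)/(2c_k)$, one gets $\alpha-\beta_k=-2/c_k<0$ because $c_k=x_ky_k>0$, whereas $n_1-1\ge 1$; hence the bracket $(\alpha-\beta_k)-(n_1-1)$ is strictly negative and in particular nonzero. The remaining products are nonzero as well (the $\alpha_l$ are distinct from $\alpha$, the $c_l$ are positive, and $g_l(\alpha)\ne 0$ for $l>n_1$), so $h(\alpha)\ne 0$ and the multiplicity of $\alpha$ in $f(\theta)$ is exactly $n_1$. The main obstacle is precisely this non-cancellation at order $n_1$: the order count is routine, but showing the leading coefficient survives hinges on the collapse via $f_m(\alpha)=c_m(\alpha_m-\alpha)$ followed by the observation that $\alpha-\beta_k=-2/c_k<0<n_1-1$, which is what rules out an accidental vanishing of $h(\alpha)$.
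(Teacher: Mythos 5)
Your proposal is correct and, at the decisive step, it is actually more complete than the paper's own proof. The order-counting half coincides with the paper: both arguments expand \eqref{eq:5}, observe that each summand with $i\le n_1$ vanishes at $\alpha$ to order exactly $n_1$ (the $i=k$ summand picking up one factor from $f_k$ and $n_1-1$ from the simple zeros, the others picking up two from $g_k$ and $n_1-2$ from the remaining simple zeros) while the summands with $i>n_1$ vanish to order at least $n_1+1$, and so write $f(\theta)=(\theta-\alpha)^{n_1}h(\theta)$. Where you genuinely diverge is the proof that $h(\alpha)\ne 0$. The paper asserts this only on the grounds that $\alpha$ is not a zero of any $f_i$ or $g_j$ beyond $f_k$ and the designated $n_1$ factors; but $h(\alpha)$ is a \emph{sum} of $n_1$ individually nonzero terms, and individual nonvanishing does not by itself exclude cancellation. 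Your computation closes exactly this gap: using \eqref{eq:7} to get $f_m(\alpha)=c_m(\alpha_m-\alpha)$ at each simple-zero index, every $m$-term collapses to $-1$ times a common nonzero product, and the $k$-term is yet another negative multiple of that same product, so no cancellation can occur. This is the real content of the lemma, and the identical device also repairs the same soft spot in the paper's proof of \hyperref[thm:9]{Lemma \ref{thm:9}}.

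One bookkeeping slip, which does not affect the conclusion: the leading coefficient of $f_k$ is $c_k^2$, not $c_k$ (from \eqref{eq:3}, $f_k(\theta)=c_k^2(\theta-\alpha)(\theta-\beta_k)$), so the $i=k$ term contributes $c_k^2(\alpha-\beta_k)=-2c_k$ after you factor out $\prod_{l\le n_1,\,l\ne k}c_l(\alpha-\alpha_l)$, and the bracket in your formula should read $c_k(\alpha-\beta_k)-(n_1-1)=-2-(n_1-1)=-(n_1+1)$ rather than $(\alpha-\beta_k)-(n_1-1)=-2/c_k-(n_1-1)$. Since $c_k=x_ky_k>0$, both expressions are strictly negative, so your sign argument — indeed, with the corrected constant, a purely algebraic identity needing no sign check at all — still yields $h(\alpha)\ne 0$ and multiplicity exactly $n_1$.
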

	
	\begin{proof}
		Suppose $\alpha$ is a double zero of $g_{1}(\theta)$ and $\alpha$ is a common zero of exactly $n_{1}$ $g_{i}(\theta)$ (say $g_{1}(\theta), g_{2}(\theta), \ldots, g_{n_{1}}(\theta)$), then  by \hyperref[thm:4]{Lemma \ref{thm:4}}, $V(g_{1})=\{\alpha,\alpha\}$ and $V(f_{1},g_{1})=\{\alpha\}$. Thus, by \hyperref[thm:2]{Theorem \ref{thm:2}}, $V(f,g)\ne \emptyset$ and $\alpha \in V(f,g)$. To count the multiplicity of $\alpha$ in $f(\theta)$, suppose $\beta$ and $\alpha_{i}$ ($\beta \ne \alpha_{i} \ne \alpha$) are the other zeros of $f_{1}(\theta)$ and $g_{i}(\theta)$ $\forall i \in \{2,3,\ldots,n_{1}\}$, respectively, then	
		\begin{align*}
			f(\theta)& =\displaystyle \sum_{i=1}^{n} \left(f_{i}(\theta) \displaystyle \prod_{j=1	j\ne i}^{n} g_{j}(\theta)\right)\\
			&= f_{1}(\theta) \left(\displaystyle \prod_{j=2}^{n} g_{j}(\theta) \right) + \displaystyle \sum_{i=2}^{n_{1}} \left(f_{i}(\theta) \displaystyle \prod_{j=1	j\ne i}^{n} g_{j}(\theta)\right)+\displaystyle \sum_{i=n_{1}}^{n} \left(f_{i}(\theta) \displaystyle \prod_{j=1	j\ne i}^{n} g_{j}(\theta)\right),
		\end{align*}
		or
		\begin{multline*}
			f(\theta)= f_{1}(\theta) \left(\displaystyle \prod_{j=2}^{n_{1}} g_{j}(\theta) \right)  \left(\displaystyle \prod_{j=n_{1}}^{n} g_{j}(\theta) \right) \\
			+\displaystyle \sum_{i=2}^{n_{1}} \left(f_{i}(\theta) \left(\displaystyle \prod_{j=1	j\ne i}^{n_{1}} g_{j}(\theta) \right) \left(\displaystyle \prod_{j=n_{1}}^{n} g_{j}(\theta)\right) \right) \\
			+\displaystyle \sum_{i=n_{1}}^{n} \left( f_{i}(\theta) \left(\displaystyle \prod_{j=1}^{n_{1}} g_{j}(\theta)\right) \left(\displaystyle \prod_{j=n_{1}	j\ne i}^{n} g_{j}(\theta)\right) \right),
		\end{multline*}
		or 
		\begin{multline*}
			f(\theta)=\left( \theta -\beta \right) \left(\theta-\alpha \right)^{n_{1}}  \left(\displaystyle \prod_{j=2}^{n_{1}} \left(\theta-\alpha_{j}\right) \right)  \left(\displaystyle \prod_{j=n_{1}}^{n} g_{j}(\theta) \right) \\
			+ \left(\theta-\alpha \right)^{n_{1}} \left(\displaystyle \sum_{i=2}^{n_{1}} \left(f_{i}(\theta)  \left(\displaystyle \prod_{j=2 j\ne i}^{n_{1}}(\theta-\alpha_{j})\right) \left(\displaystyle \prod_{j=n_{1}}^{n} g_{j}(\theta)\right) \right) \right)\\
			+  \left(\theta-\alpha \right)^{n_{1}+1}  \left(\displaystyle \sum_{i=n_{1}}^{n} \left(f_{i}(\theta) \left(\displaystyle \prod_{j=2}^{n_{1}}(\theta-\alpha_{j})\right) \left(\displaystyle \prod_{j=n_{1}, j\ne i}^{n} g_{j}(\theta)\right) \right) \right),
		\end{multline*}
		or
		\begin{equation*}
			f(\theta)= (\theta-\alpha)^{n_{1}} h(\theta),
		\end{equation*}
		where
		\begin{multline*}
			h(\theta)=\left( \theta -\beta \right)  \left(\displaystyle \prod_{j=2}^{n_{1}} \left(\theta-\alpha_{j}\right) \right)  \left(\displaystyle \prod_{j=n_{1}}^{n} g_{j}(\theta) \right)\\
			+ \displaystyle \sum_{i=2}^{n_{1}} \left(f_{i}(\theta) \left( \displaystyle \prod_{j=2 j\ne i}^{n_{1}}(\theta-\alpha_{j})\right) \left(\displaystyle \prod_{j=n_{1}}^{n} g_{j}(\theta)\right) \right)\\
			+  \left(\theta-\alpha \right)  \left(\displaystyle \sum_{i=n_{1}}^{n} \left(f_{i}(\theta) \left(\displaystyle \prod_{j=2}^{n_{1}}(\theta-\alpha_{j})\right) \left(\displaystyle \prod_{j=n_{1}, j\ne i}^{n} g_{j}(\theta)\right) \right) \right),
		\end{multline*}
		does not have $\alpha $ as a zero, since $\alpha$ is not a zero of any $f_{i}(\theta)$ and $g_{j}(\theta)$ other than $f_{1}(\theta)$ and $n_{1}$ $g_{i}(\theta)$.
		
		Hence, the multiplicity of $\alpha$ in $f(\theta)$ is $n_{1}$.
	\end{proof}

	\section{The ML degree of the association parameter}
	In this section, the ML-degree of the association parameter $\theta$ is calculated for all cases where $V(f,g)$ is non-empty, as discussed in section 3. This calculation uses the multiplicity of elements of $V(f,g)$ in $f(\theta)$ counted in section 4.

	\begin{theorem}
		\label{thm:11}
		For $n\ge4$, the ML-degree of the association parameter $\theta$ in GBED($\theta$) is $n+1$, if all pairs $(g_{i},g_{j})$ $i \ne j$ have common zeros and none of $g_{i}(\theta)$ have double zeros.
	\end{theorem}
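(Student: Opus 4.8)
The plan is to reduce the statement entirely to the structural results of Sections 3 and 4, since the two hypotheses are exactly those under which the common-zero pattern of $f$ and $g$ has already been determined. Because $f(\theta)$ has degree $2n$, it has $2n$ zeros over $\mathbb{C}$ counted with multiplicity, and the ML-degree is $2n$ minus the total multiplicity in $f(\theta)$ of the zeros that $f$ shares with $g$. So the computation comes down to identifying $V(f,g)$ and the multiplicity in $f(\theta)$ of each of its points.

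First I would invoke Theorem \ref{thm:6}: under $n\ge 4$ together with the assumption that every pair $(g_i,g_j)$ has a common zero, the set $V(g_1,g_2,\ldots,g_n)$ is a singleton $\{\alpha\}$, and moreover each $g_i$ has zeros $\{\alpha,\alpha_i\}$ with $\alpha$ and all the $\alpha_i$ pairwise distinct. Next I would locate $V(f,g)$. By Theorem \ref{thm:2}, every common zero of $f$ and $g$ arises either from a common zero of some pair $(f_k,g_k)$ or from a common zero of some pair $(g_j,g_k)$ with $j\ne k$. The first possibility is ruled out: by Lemma \ref{thm:4}, $f_k$ and $g_k$ share a zero only when $g_k$ has a double zero, which the second hypothesis forbids. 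The second possibility forces the shared point to be $\alpha$, since the only zero common to two distinct $g_i$'s is $\alpha$ (the $\alpha_i$ being distinct). Hence $V(f,g)=\{\alpha\}$. As a sanity check one verifies that each $\alpha_i$ fails to be a zero of $f$: evaluating $f(\alpha_i)=\sum_k f_k(\alpha_i)\prod_{j\ne k} g_j(\alpha_i)$, every term with $k\ne i$ carries the factor $g_i(\alpha_i)=0$, while the surviving $k=i$ term equals $f_i(\alpha_i)\prod_{j\ne i} g_j(\alpha_i)\ne 0$ because $f_i(\alpha_i)\ne 0$ (again by Lemma \ref{thm:4}, as $g_i$ has no double zero).

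Then I would compute the multiplicity of $\alpha$ in $f(\theta)$. Since $\alpha$ is a common zero of exactly $n_1=n$ of the $g_i$'s and no $g_i$ has a double zero, Lemma \ref{thm:9} applies with $n_1=n$ and yields multiplicity $n_1-1=n-1$. Subtracting from the total count then gives the ML-degree $2n-(n-1)=n+1$, as claimed.

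I do not anticipate a genuine obstacle, since the substantive work has been front-loaded into Theorem \ref{thm:6} and Lemma \ref{thm:9}. The one point that demands care is confirming that $\alpha$ is shared by all $n$ polynomials, so that $n_1=n$ rather than some smaller value, and that no other point of $V(g)$ leaks into $V(f)$; both follow from the distinctness of the $\alpha_i$ guaranteed by Theorem \ref{thm:6}. Thus the argument is essentially a careful bookkeeping of multiplicities rather than a new estimate.
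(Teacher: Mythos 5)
Your proposal is correct and takes essentially the same route as the paper: both invoke Theorem \ref{thm:6} to obtain the singleton $V(g_1,\ldots,g_n)=\{\alpha\}$, use Theorem \ref{thm:2} (with the no-double-zero hypothesis, via Lemma \ref{thm:4}, eliminating the $V(f_k,g_k)\ne\emptyset$ alternative) to conclude $V(f,g)=\{\alpha\}$, and apply Lemma \ref{thm:9} with $n_1=n$ to get multiplicity $n-1$, yielding $2n-(n-1)=n+1$. Your additional checks --- that any pairwise common zero must equal $\alpha$ by distinctness of the $\alpha_i$, and that no $\alpha_i$ is a zero of $f$ --- merely make explicit what the paper's shorter proof leaves implicit.
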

	
	\begin{proof}
		Given that $V(g_{i},g_{j})\ne \emptyset$ $\forall i\ne j$, then by \hyperref[thm:6]{Theorem \ref{thm:6}}, $V(g_{1},g_{2},\ldots,g_{n})$ is a singleton set, say $V(g_{1},g_{2},\ldots,g_{n})=\{\alpha\}$, and it is given that none of  $g_{i}(\theta)$ has double zero, that is, $V(f_{i},g_{i})= \emptyset$ $\forall i$. Thus, by \hyperref[thm:2]{Theorem \ref{thm:2}} $V(f,g)\ne \emptyset$ and $\alpha$ is the only common zero of $f(\theta)$ and $g(\theta)$, and by \hyperref[thm:9]{Lemma \ref{thm:9}}, the multiplicity of $\alpha$ in $f(\theta)$ is $n-1$.
		
		Hence, the ML-degree of the association parameter $\theta$ in GBED($\theta$) will be the number of elements in $V(f)\setminus V(f,g)$ (counted with multiplicity), which is equal to $2n-(n-1)=n+1$.
	\end{proof}

	For $n=3$, let $g_{1}(\theta)$, $g_{2}(\theta)$, and $g_{3}(\theta)$ be three polynomials as defined in \eqref{eq:4}. If all pairs $(g_{i},g_{j})$ $i\ne j \in \{1,2,3\}$ have common zeros and none of the $g_{i}(\theta)$ have double zero, then there are two possibilities for $V(g_{1},g_{2},g_{3})$: either $V(g_{1},g_{2},g_{3})\ne \emptyset$ or $V(g_{1},g_{2},g_{3})=\emptyset$.
	\begin{enumerate}
		\item 	If $V(g_{1},g_{2},g_{3})\ne \emptyset$, then the varieties are of the form: $V(g_{1})=\{ \alpha, \alpha_{1}\}$, $V(g_{2})=\{ \alpha, \alpha_{2}\}$, $V(g_{3})=\{ \alpha,\alpha_{3}\}$ ($\alpha\ne \alpha_{1}\ne \alpha_{2}\ne \alpha_{3}$) and $\alpha \in V(g_{1},g_{2},g_{3})$. Then, by \hyperref[thm:3]{Corollary \ref{thm:3}} and \hyperref[thm:9]{Lemma \ref{thm:9}}, $\alpha$ is the only common zero of $f(\theta)$ and $g(\theta)$ with multiplicity $2$ in $f(\theta)$. Hence, the ML-degree of the association parameter $\theta$ is $6-2=4$. 
		\item If $V(g_{1},g_{2},g_{3})=\emptyset$, then the varieties are of the form : $V(g_{1})=\{ \alpha, \alpha_{1}\}$, $V(g_{2})=\{ \alpha, \alpha_{2}\}$, $V(g_{3})=\{\alpha_{1}, \alpha_{2}\}$, $\alpha\ne \alpha_{1}\ne \alpha_{2}$. Then, by \hyperref[thm:2]{Theorem \ref{thm:2}}, $V(f,g)=\{\alpha, \alpha_{1}, \alpha_{2}\}$, and it is given that $V(f_{i},g_{i})= \emptyset$ for every $i$, then by \hyperref[thm:9]{Lemma \ref{thm:9}}, the multiplicity of each of $\alpha, \alpha_{1}, \alpha_{2}$ in $f(\theta)$ is $1$. Hence, the ML-degree of the association parameter $\theta$ is $6-3=3$.
	\end{enumerate}

	\begin{theorem}
		\label{thm:12}
		The ML-degree of the association parameter $\theta$ in GBED($\theta$) is $n$, if all pairs $(g_{i},g_{j})$ $i \ne j$ have common zeros with one of $g_{i}(\theta)$ having double zero.
	\end{theorem}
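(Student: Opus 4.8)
The plan is to combine the structural description of the common zeros provided by \hyperref[thm:7]{Theorem~\ref{thm:7}} with the multiplicity formula of \hyperref[thm:10]{Lemma~\ref{thm:10}}. Under the hypotheses---all pairs $(g_i,g_j)$, $i\ne j$, share a common zero and one polynomial, say $g_k(\theta)$, carries a double zero (exactly one, by \hyperref[thm:5]{Lemma~\ref{thm:5}})---\hyperref[thm:7]{Theorem~\ref{thm:7}} yields that $V(g_1,g_2,\ldots,g_n)=\{\alpha\}$ is a nonempty singleton, where $\alpha$ is the double zero of $g_k(\theta)$. Because $V(g_1,\ldots,g_n)$ is precisely the set of zeros common to \emph{all} the $g_i(\theta)$, this already shows that $\alpha$ is a common zero of every one of the $n$ polynomials; in the notation of \hyperref[thm:10]{Lemma~\ref{thm:10}} this means $n_1=n$. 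The mechanism is transparent: $g_k(\theta)$ has $\alpha$ as its only root, so each pairwise common zero $V(g_i,g_k)$, $i\ne k$, is forced to equal $\alpha$, whence $\alpha\in V(g_i)$ for all $i$, and consequently no $g_i(\theta)$ with $i\ne k$ can have a second root equal to $\alpha$.

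Next I would verify that $\alpha$ is the \emph{only} element of $V(f,g)$. By \hyperref[thm:2]{Theorem~\ref{thm:2}}, any common zero of $f(\theta)$ and $g(\theta)$ is either a common zero of some pair $(f_m,g_m)$ or of some pair $(g_j,g_m)$, $j\ne m$. By \hyperref[thm:4]{Lemma~\ref{thm:4}}, a pair $(f_m,g_m)$ has a common zero only when $g_m(\theta)$ has a double zero, which occurs solely for $m=k$ and contributes exactly $\alpha$; and, by the first paragraph, every pairwise common zero among the $g_i(\theta)$ equals $\alpha$ as well. Hence $V(f,g)=\{\alpha\}$.

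It then remains to assemble the count. Since $\alpha$ is a common zero of exactly $n_1=n$ of the $g_i(\theta)$ and one of them, $g_k(\theta)$, has a double zero, \hyperref[thm:10]{Lemma~\ref{thm:10}} gives that the multiplicity of $\alpha$ in $f(\theta)$ equals $n$. As $\deg f(\theta)=2n$ and $V(f,g)=\{\alpha\}$, the ML-degree---the number of points of $V(f)\setminus V(f,g)$ counted with multiplicity---is $2n-n=n$, which is the asserted value.

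The main obstacle is the bookkeeping of the first two steps together: one must be certain both that the single shared point of every pair coincides with the double zero $\alpha$ (so that $n_1=n$, not something smaller) and that no second $g_i(\theta)$ secretly carries a double zero, since these two facts are exactly what make a single application of \hyperref[thm:10]{Lemma~\ref{thm:10}} with $n_1=n$ legitimate. Once the structure $V(f,g)=\{\alpha\}$ with $\alpha$ of multiplicity $n$ is secured, the arithmetic $2n-n=n$ is immediate.
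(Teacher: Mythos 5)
Your proposal is correct and follows essentially the same route as the paper's proof: \hyperref[thm:7]{Theorem~\ref{thm:7}} to pin down $V(g_1,\ldots,g_n)=\{\alpha\}$, \hyperref[thm:2]{Theorem~\ref{thm:2}} to conclude $V(f,g)=\{\alpha\}$, \hyperref[thm:10]{Lemma~\ref{thm:10}} with $n_1=n$ for the multiplicity, and the count $2n-n=n$. Your write-up is in fact more careful than the paper's, since you explicitly justify (via \hyperref[thm:5]{Lemma~\ref{thm:5}} and \hyperref[thm:4]{Lemma~\ref{thm:4}}) why no other common zeros can arise and why $n_1=n$, steps the paper passes over silently.
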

	\begin{proof}
		Given that $V(g_{i},g_{j})\ne \emptyset$ $\forall i\ne j$ and one $g_{k}(\theta)$ has double zero, then by \hyperref[thm:7]{Theorem \ref{thm:7}}, $V(g_{1},g_{2},\ldots,g_{n})$ is a singleton set, say $V(g_{1},g_{2},\ldots,g_{n})=\{\alpha\}$, and by \hyperref[thm:2]{Theorem \ref{thm:2}}, $V(f,g)\ne \emptyset$ and $V(f,g)=\{\alpha\}$. Thus, by \hyperref[thm:10]{Lemma \ref{thm:10}}, the multiplicity of $\alpha$ in $f(\theta)$ is $n$.
		
		Hence, the ML-degree of the association parameter $\theta$ in GBED($\theta$) will be the number of elements in $V(f)\setminus V(f,g)$ (counted with multiplicity), which is equal to $2n-n=n$.
	\end{proof}

	\begin{theorem}
		\label{thm:13}
		The ML-degree of the association parameter $\theta$ in GBED($\theta$) is $2n-n_{1} $, if all $g_{i}(\theta)$ have distinct zeros and exactly $n_{1}$  $(\le n)$ $g_{i}(\theta)$'s have double zeros.
	\end{theorem}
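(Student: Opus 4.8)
The plan is to assemble the structural results of Sections 3 and 4, since under these hypotheses every common zero of $f(\theta)$ and $g(\theta)$ arises from a single, well-understood mechanism. First I would fix the reading of the hypothesis: ``all $g_{i}(\theta)$ have distinct zeros'' is to be understood as meaning that no two distinct polynomials $g_{i}(\theta)$ and $g_{j}(\theta)$ ($i\ne j$) share a root, that is $V(g_{i},g_{j})=\emptyset$ for all $i\ne j$, while exactly $n_{1}$ of the individual $g_{i}(\theta)$ each carry a repeated (double) root. Under this reading, \hyperref[thm:3]{Corollary \ref{thm:3}} applies directly: the only way for $f(\theta)$ and $g(\theta)$ to share a zero is through some index $k$ with $V(f_{k},g_{k})\ne\emptyset$, so the pairwise mechanism of \hyperref[thm:2]{Theorem \ref{thm:2}} contributes nothing here.

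Next I would locate $V(f,g)$ precisely. By \hyperref[thm:4]{Lemma \ref{thm:4}}, $V(f_{k},g_{k})\ne\emptyset$ exactly when $g_{k}(\theta)$ has a double zero, and in that case $V(f_{k},g_{k})$ is the singleton consisting of that double zero $-d_{k}/(2c_{k})$. Thus the $n_{1}$ indices whose $g_{k}$ has a double root contribute exactly $n_{1}$ common zeros; because the root sets of distinct $g_{i}$'s are disjoint, these $n_{1}$ double roots are pairwise distinct, so $V(f,g)$ is a set of exactly $n_{1}$ distinct points, and no other point lies in $V(f,g)$.

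Then I would compute the multiplicity of each such point in $f(\theta)$. Each common zero $\alpha$ is a double zero of exactly one $g_{k}$ and is shared by no other $g_{i}$, which is precisely the situation of \hyperref[thm:8]{Lemma \ref{thm:8}}; that lemma assigns $\alpha$ multiplicity one in $f(\theta)$. Since the $n_{1}$ points of $V(f,g)$ are distinct, their linear factors in $f(\theta)$ are coprime, so the individual multiplicities add and the total multiplicity of $V(f,g)$ inside $f(\theta)$ is $n_{1}\cdot 1 = n_{1}$.

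Finally I would conclude by counting. As recorded in Section 3, $f(\theta)$ has degree $2n$ and hence $2n$ zeros in $\mathbb{C}$ counted with multiplicity, while the ML-degree equals the number of elements of $V(f)\setminus V(f,g)$ counted with multiplicity, namely $2n$ minus the total multiplicity of $V(f,g)$ in $f(\theta)$; this yields $2n-n_{1}$. The only genuinely delicate point is the bookkeeping in the third step: I must justify that \hyperref[thm:8]{Lemma \ref{thm:8}} may be invoked independently at each of the $n_{1}$ double roots and that the contributions are additive, which rests on the disjointness of the root sets of the $g_{i}$'s ensuring that the distinct common zeros occur at coprime linear factors and so do not interfere in the factorization of $f(\theta)$.
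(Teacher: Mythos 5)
Your proposal is correct and follows essentially the same route as the paper's own proof: the same chain of Corollary~\ref{thm:3}, Lemma~\ref{thm:4}, and Lemma~\ref{thm:8} to identify $V(f,g)$ as the $n_{1}$ distinct double roots, each of multiplicity one in $f(\theta)$, giving $2n-n_{1}$. Your explicit remarks on the reading of the hypothesis and on the additivity of multiplicities across the distinct points are careful touches the paper leaves implicit, but they do not change the argument.
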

	\begin{proof}
		Given that $V(g_{i},g_{j}) =\emptyset$ $\forall$ $i\ne j$, and $n_{1}$  $g_{i}(\theta)$ have double zeros, then by \hyperref[thm:4]{Lemma \ref{thm:4}} $V(f_{i},g_{i})\ne \emptyset $ $\forall$ $i\in \{1,2,\ldots,n_{1}\}$ and $\alpha_{i} \in V(f_{i},g_{i})$ $\forall$ $i\in \{1,2,\ldots,n_{1}\}$. Hence, by \hyperref[thm:3]{Corollary \ref{thm:3}} $V(f,g)=\{\alpha_{1},\alpha_{2},\ldots,\alpha_{n_1}\}$, and thus by \hyperref[thm:8]{Lemma \ref{thm:8}}, each $\alpha_{i}$ will be zero of $f(\theta)$ with multiplicity one.
		
		Hence, the ML-degree of the association parameter $\theta$ of GBED($\theta$) will be the number of elements in $V(f)\setminus V(f,g)$ (counted with multiplicity), which is equal to $2n-n_{1}.$
	\end{proof}

	\begin{theorem}
		\label{thm:14}
		The ML-degree of the association parameter $\theta$ in GBED($\theta$) is $2n+l-m$, if $g(\theta)$ has exactly $l$ repeated zeros, each with multiplicity $n_{k}(\ge 2)$ $\left(\sum_{k=1}^{l} n_{k} =m \le 2n\right)$ and none of the $g_{i}(\theta)$ have double zeros.
	\end{theorem}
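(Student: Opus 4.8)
The plan is to identify the common zeros of $f(\theta)$ and $g(\theta)$ together with their multiplicities in $f(\theta)$, and then subtract the total such multiplicity from $\deg f = 2n$, exactly as in the preceding theorems. The hypothesis that none of the $g_{i}(\theta)$ has a double zero is the key structural assumption: it means that each $g_{i}(\theta)$ contributes at most one factor $(\theta-\alpha)$ to $g(\theta)$ for any value $\alpha$, so the multiplicity of any zero of $g(\theta)=\prod_{i} g_{i}(\theta)$ equals the number of distinct indices $i$ for which $g_{i}(\alpha)=0$.

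First I would treat the $l$ repeated zeros. Let $\alpha_{1},\ldots,\alpha_{l}$ be the distinct zeros of $g(\theta)$ of multiplicity $n_{k}\ge 2$. By the observation above, each $\alpha_{k}$ is a common zero of exactly $n_{k}$ of the polynomials $g_{i}(\theta)$, and since each $g_{i}$ contributes at most once, one has $2\le n_{k}\le n$. Because $n_{k}\ge 2$, at least one pair $(g_{i},g_{j})$ shares $\alpha_{k}$, so Theorem \ref{thm:2} gives $\alpha_{k}\in V(f,g)$. As no $g_{i}(\theta)$ has a double zero, Lemma \ref{thm:9} applies verbatim and shows that the multiplicity of $\alpha_{k}$ in $f(\theta)$ is exactly $n_{k}-1$.

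Next I would rule out every remaining zero of $g(\theta)$ as a common zero. A simple zero $\beta$ of $g(\theta)$ is a zero of precisely one $g_{i}(\theta)$ and of no other, so no pair $(g_{j},g_{k})$ vanishes at $\beta$; and since that $g_{i}(\theta)$ has distinct zeros, Lemma \ref{thm:4} gives $V(f_{i},g_{i})=\emptyset$. Hence by Theorem \ref{thm:2} (equivalently Corollary \ref{thm:3}) we get $\beta\notin V(f,g)$. Therefore $V(f,g)=\{\alpha_{1},\ldots,\alpha_{l}\}$, and the total multiplicity of the common zeros inside $f(\theta)$ is
\[
\sum_{k=1}^{l}(n_{k}-1)=\Big(\sum_{k=1}^{l}n_{k}\Big)-l=m-l.
\]

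Finally, the ML-degree is the number of elements of $V(f)\setminus V(f,g)$ counted with multiplicity, that is, $2n$ minus the multiplicity just computed, giving $2n-(m-l)=2n+l-m$, as claimed. I do not anticipate a genuine obstacle here: the argument is essentially a bookkeeping assembly of Theorem \ref{thm:2}, Lemma \ref{thm:4} and Lemma \ref{thm:9}. The one point that needs care is the translation between the multiplicity $n_{k}$ of $\alpha_{k}$ as a zero of $g(\theta)$ and the count of the $g_{i}(\theta)$ vanishing at $\alpha_{k}$; this identification relies essentially on the hypothesis that no $g_{i}(\theta)$ has a double zero, and it is what guarantees $n_{k}\le n$ so that Lemma \ref{thm:9} is applicable.
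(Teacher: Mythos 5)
Your proof is correct and follows essentially the same route as the paper's: translate each repeated zero of $g(\theta)$ into a common zero of exactly $n_{k}$ of the $g_{i}(\theta)$'s, invoke Theorem \ref{thm:2} and Lemma \ref{thm:9} to get multiplicity $n_{k}-1$ in $f(\theta)$, and subtract $\sum_{k}(n_{k}-1)=m-l$ from $2n$. Your explicit verification that simple zeros of $g(\theta)$ cannot lie in $V(f,g)$ (via Lemma \ref{thm:4} and Corollary \ref{thm:3}) is a small step the paper leaves implicit when it asserts $V(f,g)=\{\alpha_{1},\ldots,\alpha_{l}\}$, so your write-up is, if anything, slightly more complete.
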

	
	\begin{proof}
		Since none of the $g_{i}(\theta)$ have double zeros, so repeated zero of $g(\theta)$ will occur only if it is common with $n_{k}$ $g_{i}(\theta)$'s. Given that $g(\theta)$ has exactly $l$ repeated zeros, say $\alpha_{1}, \alpha_{2},\ldots,\alpha_{l}$ each with muliplicity $n_{1},n_{2},\ldots,n_{l}$, respectively. Suppose $\alpha_{k}$ is a common zero of $g_{k_1}(\theta),g_{k_2}(\theta),\ldots,g_{k_{n_{k}}}(\theta)$. Since $n_{k}\ge 2$, by \hyperref[thm:2]{Theorem \ref{thm:2}}, $V(f,g)\ne \emptyset$ and $\alpha_{k} \in V(f,g)$, and by \hyperref[thm:9]{Lemma \ref{thm:9}}, multiplicity of $\alpha_{k}$ in $f(\theta)$ is $n_{k}-1$, for all $k=1,2,\ldots,l$.
		
		Therefore, $V(f,g)=\{\alpha_{1},\alpha_{2},\ldots,\alpha_{l}\}$.
		
		Hence, the ML-degree of the association parameter $\theta$ of GBED($\theta$) will be the number of elements in $V(f)\setminus V(f,g)$ (counted with multiplicity), which is equal to 
		\[2n-\sum_{k=1}^{l}(n_{k}-1) = 2n-(m-l) =2n+l-m.\]
	\end{proof}

	\begin{theorem}
		\label{thm:15}
		The ML-degree of the association parameter $\theta$ in GBED($\theta$) is $2n-m$, if exactly $l$ $g_{i}(\theta)$'s has double zeros and each of the double zeros is a common zero of $n_{k}$ $(\ge 2)$ $g_{i}(\theta)$'s $\left(\sum_{k=1}^{l} n_{k} =m< 2n \right)$.
	\end{theorem}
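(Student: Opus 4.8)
The plan is to reuse the three-step template of \hyperref[thm:12]{Theorem \ref{thm:12}} and \hyperref[thm:14]{Theorem \ref{thm:14}}: first determine the set $V(f,g)$, then read the multiplicity of each of its points in $f(\theta)$ from the local lemmas of Section 4, and finally subtract the accumulated multiplicity from the $2n$ zeros of $f(\theta)$.

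First I would fix notation for the geometry forced by the hypotheses. Let $\alpha_1,\ldots,\alpha_l$ be the $l$ double zeros, where $\alpha_k$ is the double zero of some polynomial $g_{i_k}(\theta)$, and let $n_k\ge 2$ be the number of $g_i(\theta)$ vanishing at $\alpha_k$. These points are pairwise distinct: by \hyperref[thm:4]{Lemma \ref{thm:4}} a double zero of $g_i(\theta)$ equals $-d_i/(2c_i)$ and forces $d_i^2=4c_i$, so if two indices shared the same double zero we would obtain $c_i=c_j$ and then $d_i=d_j$, i.e.\ $g_i=g_j$, which the data assumption in the Note excludes.

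Next I would pin down $V(f,g)$. For each $k$, \hyperref[thm:4]{Lemma \ref{thm:4}} gives $V(f_{i_k},g_{i_k})=\{\alpha_k\}\ne\emptyset$, so \hyperref[thm:2]{Theorem \ref{thm:2}} places every $\alpha_k$ in $V(f,g)$. Conversely, \hyperref[thm:2]{Theorem \ref{thm:2}} forces any common zero of $f(\theta)$ and $g(\theta)$ to be either a common zero of some $f_k,g_k$ --- which by \hyperref[thm:4]{Lemma \ref{thm:4}} occurs only at a double zero of $g_k$, hence at one of the $\alpha_k$ --- or a common zero of a pair $(g_j,g_k)$, which is a repeated zero of $g(\theta)$; since the hypothesis declares the $\alpha_k$ to be exactly the repeated zeros, no further point appears, and $V(f,g)=\{\alpha_1,\ldots,\alpha_l\}$.

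Then the count is immediate: for each $k$ the point $\alpha_k$ is a common zero of exactly $n_k$ of the $g_i(\theta)$ with one of them having a double zero, so \hyperref[thm:10]{Lemma \ref{thm:10}} gives multiplicity exactly $n_k$ of $\alpha_k$ in $f(\theta)$. As the $\alpha_k$ are distinct, the total multiplicity of $V(f,g)$ in $f(\theta)$ is $\sum_{k=1}^l n_k=m$, whence the ML-degree, being the number of points of $V(f)\setminus V(f,g)$ counted with multiplicity, equals $2n-m$; the strict bound $m<2n$ records that this is at least one. The one place that needs care is the reverse inclusion above --- ruling out a pair-shared common zero distinct from every $\alpha_k$ --- which is exactly where the hypothesis that the double-zero points exhaust the repeated zeros of $g(\theta)$ must be invoked; granting this, the remainder is direct bookkeeping through \hyperref[thm:10]{Lemma \ref{thm:10}}, parallel to \hyperref[thm:14]{Theorem \ref{thm:14}}.
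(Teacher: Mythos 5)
Your proof is correct and follows essentially the same route as the paper's: place each double zero $\alpha_k$ in $V(f,g)$ via Theorem \ref{thm:2}, read its multiplicity $n_k$ in $f(\theta)$ from Lemma \ref{thm:10}, and subtract $\sum_{k=1}^{l} n_k = m$ from the $2n$ zeros of $f(\theta)$. The only difference is that you make explicit two points the paper leaves implicit --- the pairwise distinctness of the $\alpha_k$ (via Lemma \ref{thm:4} and the data-exclusion Note) and the reverse inclusion $V(f,g)\subseteq\{\alpha_1,\ldots,\alpha_l\}$, which requires reading the hypothesis as saying the double zeros exhaust the repeated zeros of $g(\theta)$ --- so your write-up tightens rather than departs from the paper's argument.
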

	
	\begin{proof}
		Suppose $\alpha_{1}, \alpha_{2},\ldots,\alpha_{l}$ are double zeros of $g_{1_1}(\theta),g_{2_1}(\theta),\ldots,g_{l_1}(\theta)$, respectively, and $\alpha_{k}$ is common zero of $g_{k_1}(\theta),g_{k_2}(\theta),\ldots,g_{k_{n_{k}}}(\theta)$. Since  $\alpha_{k}$ is a double zero of $g_{k_1}(\theta)$, and $\alpha_{k} \in V(g_{k_1},g_{k_2},\ldots,g_{k_{n_{k}}})$, then by \hyperref[thm:2]{Theorem \ref{thm:2}} $\alpha_{k} \in V(f,g)$, and by \hyperref[thm:10]{Lemma \ref{thm:10}}, and the multiplicity of $\alpha_{k}$ in $f(\theta)$ is $n_{k}$, for all $k=1,2,\ldots,l$.
		
		Therefore, $V(f,g)=\{\alpha_{1},\alpha_{2},\ldots,\alpha_{l}\}$.
		
		Hence, the ML-degree of the association parameter $\theta$ of GBED($\theta$) will be the number of elements in $V(f)\setminus V(f,g)$ (counted with multiplicity), which is equal to 
		\[ 2n-\sum_{k=1}^{l}n_{k}= 2n-m.\]
	\end{proof}
	
	In the above theorem, every double zero is also a common zero of  $n_{k}$ $g_{j}(\theta)$, but each $g_{j}(\theta)$ is a polynomial of degree $2$ only. Hence, any $g_{j}(\theta)$ can be grouped with maximum two distinct double zeros. So, $\sum_{k=1}^{l} n_{k} =m\le  2n-l$. Moreover, since $g(\theta)= \prod_{i=1}^{n} g_{i}(\theta)$ has $2n$ zeros, and $l$ $g_{i}(\theta)$ have double zero, so the remaining zeros of $g(\theta)$ are $2(n-l)$. Since each double zero is also a zero of at least one more $g_{j}(\theta)$, which makes $l \le \frac{2n}{3}$.

Here are some examples that illustrate how the ML-degree of the association parameter varies.
\begin{example}
	 Let $n=3$ and the random sample
     \[X_{1}=(2.25,0.25)^\top,  X_{2}=(0.1,0.3)^\top,  X_{3}=(0.5,0.7)^\top.\] 
     Then 
	\[c_{1}=0.5625, c_{2}=0.03, c_{3}=0.35,\]
 \[d_{1}=1.5,d_{2}=-0.6,d_{3}=0.2.\]
	
	The score equation is 
	\[ \frac{f(\theta)}{g(\theta)}=0,\]
	where
	\begin{equation*}
		f(\theta)=0.0056 \theta^{6} -0.1287 \theta^{5} +0.3543 \theta^{4} +0.5602 \theta^{3}+ 0.3267 \theta^{2}+0.5917 \theta -0.1575,
	\end{equation*} 
	and 
		\begin{equation*}
		g(\theta)=0.0059 \theta^{6} -0.099\theta^{5} -0.1492 \theta^{4} -0.039 \theta^{3}+ 0.2225 \theta^{2}+1.1 \theta +1.
	\end{equation*} 
    $V(f_{1}, g_{1})=\{-1.3333\}$, and $g_{1}(\theta)$ has double zero, then by \hyperref[thm:2]{Theorem \ref{thm:2}}, $V(f,g)\ne \emptyset.$
    
Both polynomials$f(\theta)$ and $g(\theta)$ are of degree $6$, and 
\[V(f)=\{-1.3333, 0.2257, 4.8053, 19.6123, -0.0909\pm 0.9946 i\} \]
\[V(g)=\{-1.3333, 1.8350, 18.1650, -0.2857\pm 1.666i\},\]
and $V(f,g)=\{-1.3333\}$ with multiplicity $1$, and $-1.3333$ is double root of $g(\theta)$. Therefore, from \hyperref[thm:13]{Theorem \ref{thm:13}}, the ML-degree of the parameter $\theta$ is $5$. The likelihood function of $\theta$ given the random sample $X_{1},X_{2},X_{3}$ is
\begin{equation*}
	L(\theta\mid X_{1},X_{2},X_{3})= e^{1.8-(0.9425) \theta} g(\theta),
\end{equation*}
given as follow: 
	\begin{figure}[H]
		\centering
		\includegraphics[width=0.9\linewidth]{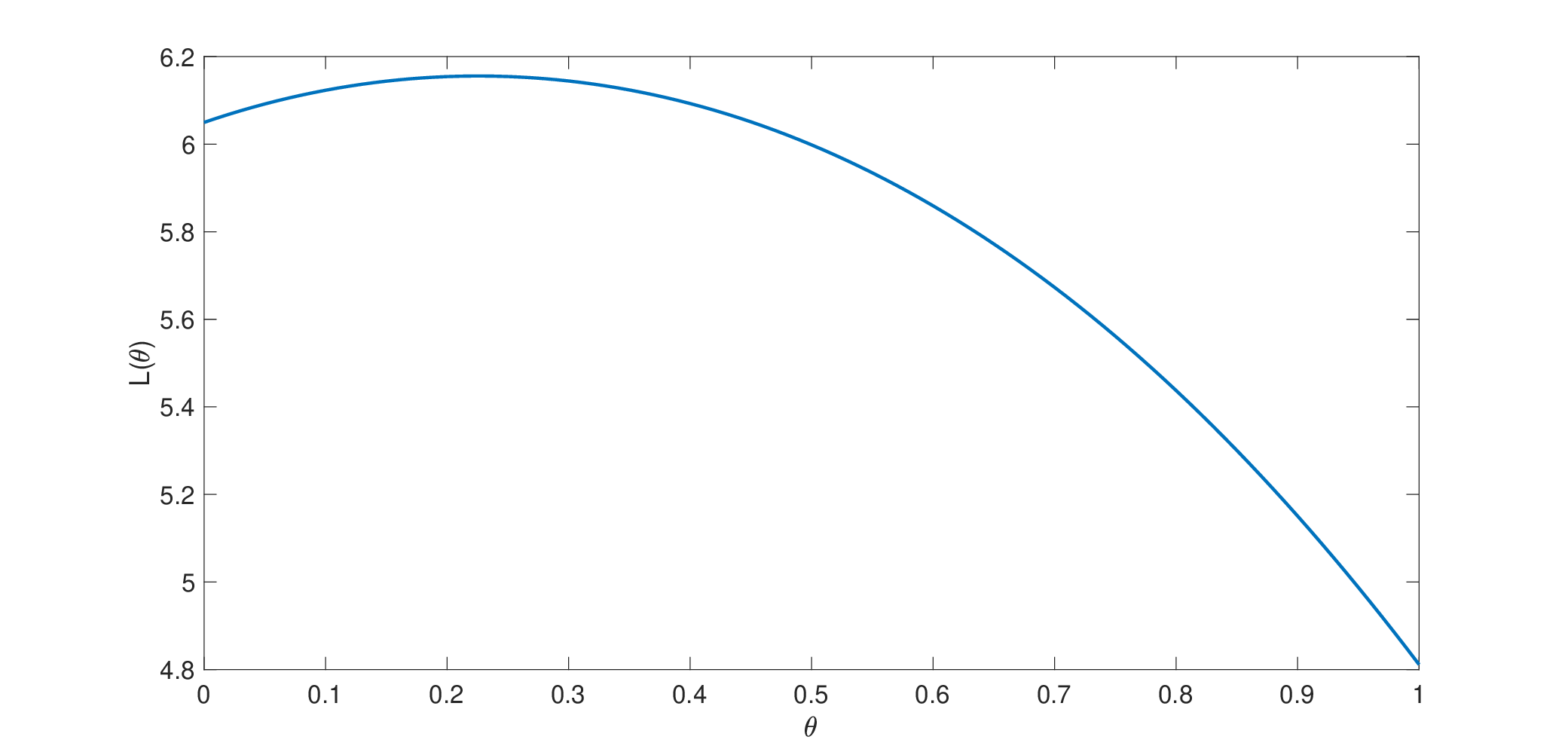}
		\caption{\footnotesize \textbf{\emph{The likelihood function of the parameter $\theta$ given the sample $X_{1},X_{2},X_{3}$.}}}
		\label{fig:ex1ch3}
	\end{figure}
	So, the MLE of the parameter $\theta$ is $0.2257$.

\end{example}

\begin{example}
	Let $n=3$ and the random sample 
	\[X_{1}=(0.406732,0.117077)^\top, X_{2}=(0.333333,0.166667)^\top,  X_{3}=(1,2)^\top,\]
	 then the score equation is 
	\[ \frac{f(\theta)}{g(\theta)}=0,\]
	where
	\begin{equation*}
		f(\theta)=0.0111 \theta^{6} -0.232\theta^{5} +1.7058 \theta^{4} -5.115 \theta^{3}+ 4.7782 \theta^{2}+1.3755\theta +1.0794,
	\end{equation*} 
	and 
	\begin{equation*}
		g(\theta)=0.0053 \theta^{6} -0.0952\theta^{5} +0.5847 \theta^{4} -1.3201 \theta^{3}+ 0.3889 \theta^{2}+1.0238 \theta +1.
	\end{equation*} 
    $V(g_{1},g_{2})=\{3\}$, from \hyperref[thm:2]{Theorem \ref{thm:2}}, $V(f,g)\ne \emptyset$.

	Both polynomials$f(\theta)$ and $g(\theta)$ are of degree $6$, and 
	\[V(f)=\{8.0235,6.3597,3.8258,3,-0.1781\pm 0.3659i\} \]
	\[V(g)=\{7,6,3,-0.5\pm 0.5i\},\]
	and $V(f,g)=\{3\}$ with multiplicity $1$, and $3$ is double root of $g(\theta)$. Therefore, from \hyperref[thm:14]{Theorem \ref{thm:14}}, the ML-degree of the parameter $\theta$ is $5$. The solutions of the score equation do not belong to the parameter space $[0,1]$. Thus, in that case, the likelihood function is either increasing or decreasing. The likelihood function of $\theta$ given the random sample $X_{1},X_{2},X_{3}$ is
	\begin{equation*}
		L(\theta\mid X_{1},X_{2},X_{3})= e^{1.9762-(2.1032) \theta} g(\theta),
	\end{equation*}
	given as follow: 
	\begin{figure}[H]
		\centering
		\includegraphics[width=0.9\linewidth]{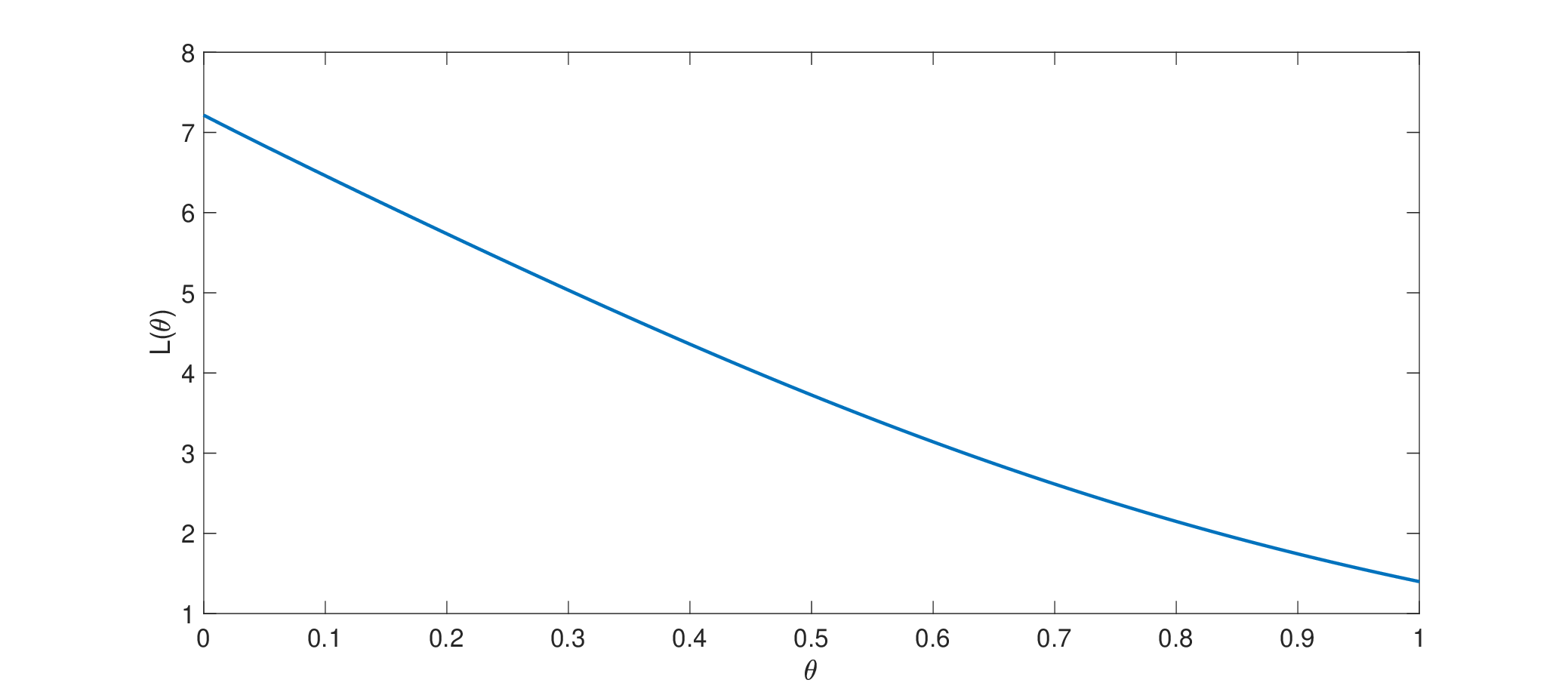}
		\caption{\footnotesize \textbf{\emph{The likelihood function of the parameter $\theta$ given the sample $X_{1},X_{2},X_{3}$.}}}
		\label{fig:ch3ex2}
	\end{figure}
So, the MLE in of the parameter $\theta$ is $0$, as the likelihood function is decreasing in the interval $[0,1]$.
\end{example}

\begin{example}
	Let $n=3$ and the random sample 
	\[X_{1}=(0.25,0.25)^\top, X_{2}=(0.435077,0.114922)^\top, X_{3}=(0.5,0.7)^\top,\]
	 then the score equation is 
	\[ \frac{f(\theta)}{g(\theta)}=0,\]
	where
	\begin{equation*}
		f(\theta)=0.0005 \theta^{6} -0.0149 \theta^{5} +0.141 \theta^{4} -0.5896 \theta^{3}+ 1.1845 \theta^{2}-1.3419 \theta +1.2125,
	\end{equation*} 
	and 
	\begin{equation*}
		g(\theta)=0.0011 \theta^{6} -0.018 \theta^{5} +0.1106 \theta^{4} -0.3181 \theta^{3}+ 0.4975 \theta^{2}-0.75 \theta +1.
	\end{equation*} 
    $V(g_{1},g_{2})=\{4\}$, and $V(f_{1},g_{1})=\{4\}$, then by \hyperref[thm:2]{Theorem \ref{thm:2}}, $V(f,g)\ne \emptyset$.
    
	Both polynomials$f(\theta)$ and $g(\theta)$ are of degree $6$, and 
	\[V(f)=\{15.9985, 4.7451, 4, 0.3290\pm 1.3657i\} \]
	\[V(g)=\{5,4,-0.2857\pm 1.666i \},\]
	and $V(f,g)=\{4\}$ with multiplicity $2$, and $4$ is root of $g(\theta)$ with multiplicity $3$. Therefore, from \hyperref[thm:15]{Theorem \ref{thm:15}}, the ML-degree of the parameter $\theta$ is $4$. The solutions of the score equation are not in the parameter space $[0,1]$. Thus, in that case the likelihood function is either increasing or decreasing. The likelihood function of $\theta$ given the random sample $X_{1},X_{2},X_{3}$ is
	\begin{equation*}
		L(\theta\mid X_{1},X_{2},X_{3})= e^{3.75-(0.4625) \theta} g(\theta),
	\end{equation*}
	given as follow: 
	\begin{figure}[H]
		\centering
		\includegraphics[width=0.9\linewidth]{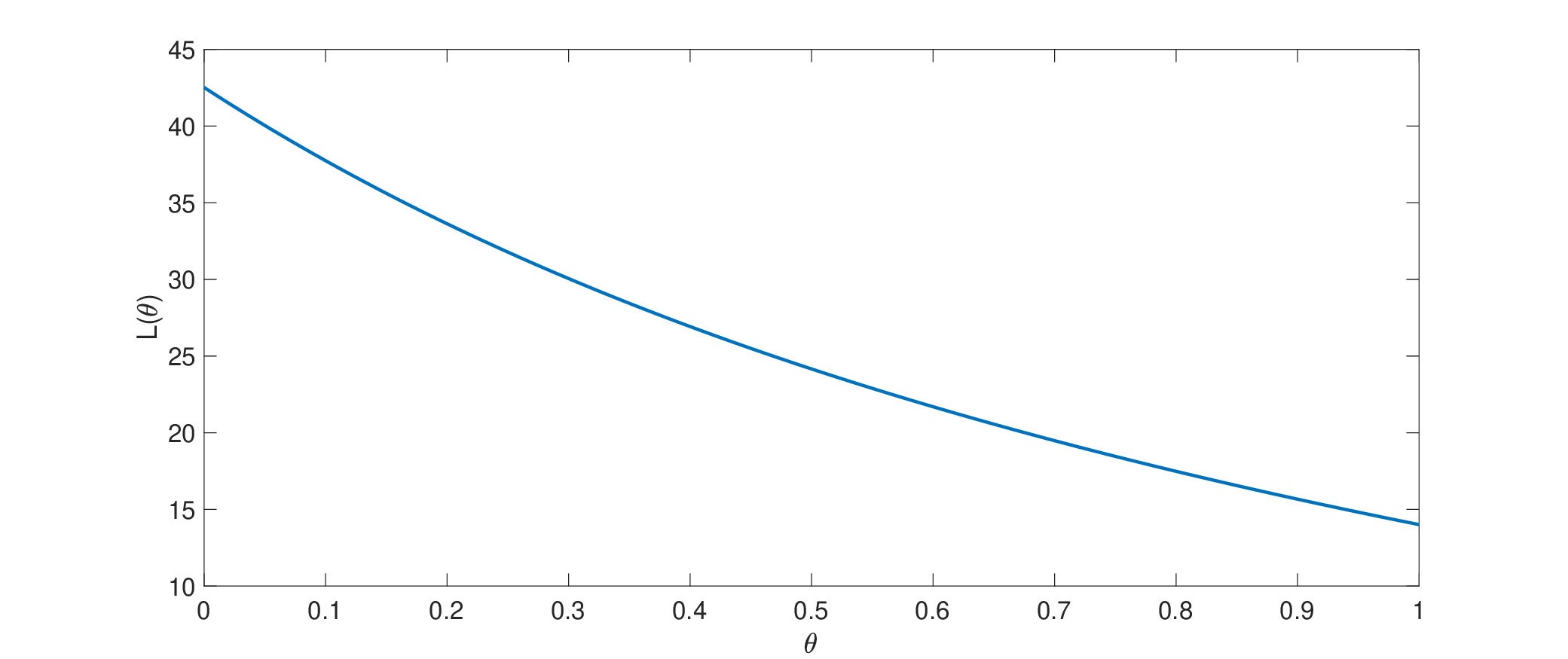}
		\caption{\footnotesize \textbf{\emph{The likelihood function of the parameter $\theta$ given the sample $X_{1},X_{2},X_{3}$.}}}
		\label{fig:ch3ex3}
	\end{figure}
	So, the MLE in of the parameter $\theta$ is $0$, as the likelihood function is decreasing in the interval $[0,1]$.
\end{example}

\section{Conclusion}
	In this paper, the ML-degree of the association parameter in GBED($\theta$) is investigated. The ML-degree of $\theta$ is the number of solutions of the score equation \eqref{eq:2} counted with multiplicity over the complex field. The score equation \eqref{eq:2} is a rational function in $\theta$, so some algebraic techniques are used to find the ML-degree. Since the score equation \eqref{eq:2} can be written as a ratio of two algebraic functions, that is, $\frac{f(\theta)}{g(\theta)}=0$, with $f(\theta)$ and $g(\theta)$ as given in \eqref{eq:5} and \eqref{eq:6}. The solutions of the score equation are in $V(f)$, but this variety may contain some elements where the score equation is not defined because that may be zero of the denominator function $g(\theta)$. 
	
	Thus, for calculation of the ML-degree of $\theta$, the elements of $V(f,g)$ should be removed from $V(f)$, and it will be the number of elements in $V(f) \setminus V(f,g)$, counted with multiplicity. To examine $V(f,g)$, the geometry of the score equation is discussed in section 3, where \hyperref[thm:2]{Theorem \ref{thm:2}} gives the conditions for $V(f,g)$ to be non-empty. Subsequently, more conditions for $V(f,g)$ to be non-empty are provided with a few more results.
	
	Since for computing the ML-degree of $\theta$, the multiplicity of each element of $V(f) \setminus V(f,g)$ is required, but the elements of $V(f) \setminus V(f,g)$ are not known. Hence, the multiplicity of the elements of $V(f,g)$ is counted in $V(f)$, and it is subtracted from $2n$ for computing the ML-degree of $\theta$. In section 4, the multiplicity of the elements of $V(f,g)$ is counted in $V(f)$ for all possible cases. Finally, in section 5, the ML-degree of the association parameter $\theta$ $(m_{d}(\theta))$ in GBED($\theta$) is computed for all possible cases in Theorems \ref{thm:11} to \ref{thm:15}.
	
	If $V(f,g)=\emptyset$, then $m_{d}(\theta)=2n$, as  $f(\theta)$ is a polynomial of degree $2n$, which is maximum. The ML-degree of $\theta$ will be minimum when the multiplicity of elements of $V(f,g)$ is maximum in $f(\theta)$. If $V(g_{i},g_{j})\ne \emptyset$ $\forall i\ne j$, then $m_{d}(\theta)=n+1$ or $n$, depending on if none of the $g_{i}(\theta)$ have double zeros or only one $g_{i}(\theta)$ has double zeros as shown in Theorems \ref{thm:11} and \ref{thm:12}, respectively. Observe that \hyperref[thm:11]{Theorem \ref{thm:11}} has one additional condition on sample size as $n\ge 4$. Further, if $V(g_{i},g_{j})=\emptyset$ $\forall i\ne j$, then $m_{d}(\theta)=2n-n_{1}$, where $n_{1}(\le n)$ $g_{i}(\theta)$ have double zeros, which gives minimum value for $m_{d}(\theta)$ as $n$.
	
	In \hyperref[thm:14]{Theorem \ref{thm:14}}, if some of the $g_{i}(\theta)$ have common zeros and none of the $g_{i}(\theta)$ have double zero, then $m_{d}(\theta)=2n+l-m$, where $g(\theta)= \prod_{i=1	}^{n} g_{i}(\theta)$ has exactly $l$ $(\le n)$ repeated zeros with total multiplicity $m$ $(\le2n)$. Here, $m=2n$ if $l=n$ (trivial case), but $m$ can be $2n$ for $3\le l < n$, giving $m_{d}(\theta)$ as lower as $l$, the number of repeated zeros of $g(\theta)$. Further, in \hyperref[thm:15]{Theorem \ref{thm:15}}, it is shown that $m_{d}(\theta)=2n-m$ if some $g_{i}(\theta)$ has double zero, which are also zeros of other $g_{j}(\theta)$ (at least one). As discussed after \hyperref[thm:15]{Theorem \ref{thm:15}}, the number of $g_{i}(\theta)$ having double zeros (say $l$) does not exceed $\frac{2n}{3}$ and $m\le 2n-l$. If $l=1$, then by \hyperref[thm:10]{Lemma \ref{thm:10}}, $m\le n$, hence $m_{d}(\theta)\ge n$, and for $l\ge 2$, $m\le 2n-l$, thus $m_{d}(\theta)\ge l \ge 2$, the equality depends on $n$ (sample size).
	
	The maximum likelihood estimator of $\theta$ is one of the solutions of the score equation that lie in the interval $[0,1]$, but the score equation has many complex solutions (more than or equal to $2$ depending on $n$). Still, the number of real solutions among them remains unknown and requires further investigation. A closed-form expression for the maximum likelihood estimate of the association parameter can not be obtained, as the score equation is the sum of rational functions over each data point.

\section*{Acknowledgements}The first author would like to thank the University Grants Commission, India, for providing financial support.


\begin{thebibliography}{90}
	
	\bibitem{ABBG}C. Améndola, N. Bliss, I. Burke, C. R. Gibbons, M. Helmer, S. Hoşten, E. D. Nash, J. I. Rodriguez and D. Smolkin, The maximum likelihood degree of toric varieties, \emph{Journal of Symbolic Computation}, \textbf{92}, (2019) 222-242.
	\bibitem{Ba}V. Barnett, The bivariate exponential distribution; a review and some new results. \emph{Statistica Neerlandica}, \textbf{39}(4), (1985) 343-356.
	\bibitem{BBa} N. Balakrishnan and A. P. Basu, \emph{Exponential distribution: theory, methods and applications}, Gordon and Breach, Amsterdam, (1995).
	\bibitem{BBe}K. Balasubramanian and M. I.Beg, Concomitant of order statistics in Gumbel's bivariate exponential distribution, \emph{ Sankhyā: The Indian Journal of Statistics, Series B}, (1998) 399-406.
	\bibitem{CHKS}F. Catanese, S. Hoşten, A. Khetan and B. Sturmfels, The maximum likelihood degree, \emph{American Journal of Mathematics}, \textbf{128}(3), (2006) 671-697.
	\bibitem{CLOS}D. Cox, J. Little, D. O'shea and M. Sweedler, \emph{Ideals, varieties, and algorithms (Vol. 3)}, New York: Springer (1997).
	\bibitem{CMR}J. I. Coons, O. Marigliano and M. Ruddy, Maximum likelihood degree of the two-dimensional linear Gaussian covariance model, \emph{Algebraic Statistics}, \textbf{11}(2), (2020) 107-123.
	\bibitem{CSH}E. Castillo, J. M. Sarabia, and A. S. Hadi, Fitting continuous bivariate distributions to data, \emph{Journal of the Royal Statistical Society: Series D (The Statistician)}, \textbf{46}(3), (1997) 355-369.
	\bibitem{DNR}G. R. Dargahi-Noubary and M. Razzaghi, Earthquake hazard assessment based on bivariate exponential distributions, \emph{Reliability Engineering and System Safety}, \textbf{44}(2), (1994) 153-166.
	\bibitem{DSS}M. Drton, B. Sturmfels, and S. Sullivant, \emph{Lectures on algebraic statistics} (Vol. 39), Springer Science and Business Media, (2008). 
	\bibitem{Gu}E. J. Gumbel, Bivariate exponential distributions, \emph{Journal of the American Statistical Association}, \textbf{55}(292), (1960) 698-707.
	\bibitem{Ha}R. Harris,  Reliability applications of a bivariate exponential distribution, \emph{Operations Research}, \textbf{16}(1), (1968) 18-27.
	\bibitem{KBJ}S. Kotz, N. Balakrishnan and N. L. Johnson,  \emph{Continuous multivariate distributions, Volume 1: Models and applications},  John Wiley and Sons, (Vol. 334) (2019).
	\bibitem{LB}C. D. Lai and N. Balakrishnan, \emph{Continuous bivariate distributions}, Springer-Verlag New York, (2009).
	\bibitem{LNRW}J. Lindberg, N. Nicholson, J. I. Rodriguez and Z. Wang, The maximum likelihood degree of sparse polynomial systems, \emph{SIAM Journal on Applied Algebra and Geometry}, \textbf{7}(1), (2023) 159-171.
	\bibitem{MO}A. W. Marshall and I. Olkin, A multivariate exponential distribution, \emph{Journal of the American Statistical Association}, \textbf{62}(317), (1967) 30-44.
	\bibitem{MMW}M. Michałek, L. Monin and J. A. Wisniewski, Maximum likelihood degree, complete quadrics, and C*-action, \emph{SIAM journal on applied algebra and geometry}, \textbf{5}(1), (2021) 60-85.
	\bibitem{NK}S. Nadarajah and S. Kotz, Reliability for some bivariate exponential distributions, \emph{Mathematical Problems in Engineering}, 2006(1), (2006) 041652.
	\bibitem{Pa}Y. Pawitan, \emph{In all likelihood: statistical modelling and inference using likelihood}, Oxford University Press, (2001). 
	\bibitem{PM}S. Pal and G. S. R. Murthy, An application of Gumbel's bivariate exponential distribution in estimation of warranty cost of motor cycles, \emph{International Journal of Quality and Reliability Management}, \textbf{20}(4), (2003) 488-502.
	\bibitem{S}S. Sullivant, \emph{Algebraic Statistics}, American Mathematical Soc., \textbf{194} (2018).
	\bibitem{SY}Y. C. Sevil and T. O. Yildiz, Gumbel’s bivariate exponential distribution: estimation of the association parameter using ranked set sampling, \emph{Computational Statistics}, (2022) 1-32.
	
	
\end{thebibliography}
\end{document}